\begin{document}
\newcommand{\dbar}{\ensuremath{\overline\partial}}
\newcommand{\dbarstar}{\ensuremath{\overline\partial^*}}
\newcommand{\de}{\ensuremath{\partial}}
\newcommand{\C}{\ensuremath{\mathbb{C}}}
\newcommand{\R}{\ensuremath{\mathbb{R}}}
\newcommand{\D}{\ensuremath{\mathbb{D}}}
\newcommand{\T}{\ensuremath{\mathbb{T}}}

\makeatletter
\newcommand{\sumprime}{\if@display\sideset{}{'}\sum%
            \else\sum'\fi}
\makeatother

\numberwithin{equation}{section}

\newtheorem{theorem}{Theorem}[section]
\newtheorem{proposition}[theorem]{Proposition}
\newtheorem{conjecture}[theorem]{Conjecture}
\def\theconjecture{\unskip}
\newtheorem{corollary}[theorem]{Corollary}
\newtheorem{lemma}[theorem]{Lemma}
\newtheorem{observation}[theorem]{Observation}
\theoremstyle{definition}
\newtheorem{definition}{Definition}
\numberwithin{definition}{section}
\newtheorem{remark}{Remark}
\def\theremark{\unskip}
\newtheorem{question}{Question}
\def\thequestion{\unskip}
\newtheorem{example}{Example}
\def\theexample{\unskip}
\newtheorem{problem}{Problem}

\def\vvv{\ensuremath{\mid\!\mid\!\mid}}
\def\intprod{\mathbin{\lr54}}
\def\reals{{\mathbb R}}
\def\integers{{\mathbb Z}}
\def\N{{\mathbb N}}
\def\complex{{\mathbb C}\/}
\def\P{{\mathbb P}\/}
\def\dist{\operatorname{dist}\,}
\def\spec{\operatorname{spec}\,}
\def\interior{\operatorname{int}\,}
\def\trace{\operatorname{tr}\,}
\def\cl{\operatorname{cl}\,}
\def\essspec{\operatorname{esspec}\,}
\def\range{\operatorname{\mathcal R}\,}
\def\kernel{\operatorname{\mathcal N}\,}
\def\dom{\operatorname{\mathcal D}\,}
\def\linearspan{\operatorname{span}\,}
\def\lip{\operatorname{Lip}\,}
\def\sgn{\operatorname{sgn}\,}
\def\Z{ {\mathbb Z} }
\def\e{\varepsilon}
\def\p{\partial}
\def\rp{{ ^{-1} }}
\def\Re{\operatorname{Re\,} }
\def\Im{\operatorname{Im\,} }
\def\dbarb{\bar\partial_b}
\def\eps{\varepsilon}
\def\Lip{\operatorname{Lip\,}}

\def\Hs{{\mathcal H}}
\def\E{{\mathcal E}}
\def\scriptu{{\mathcal U}}
\def\scriptr{{\mathcal R}}
\def\scripta{{\mathcal A}}
\def\scriptc{{\mathcal C}}
\def\scriptd{{\mathcal D}}
\def\scripti{{\mathcal I}}
\def\scriptk{{\mathcal K}}
\def\scripth{{\mathcal H}}
\def\scriptm{{\mathcal M}}
\def\scriptn{{\mathcal N}}
\def\scripte{{\mathcal E}}
\def\scriptt{{\mathcal T}}
\def\scriptr{{\mathcal R}}
\def\scripts{{\mathcal S}}
\def\scriptb{{\mathcal B}}
\def\scriptf{{\mathcal F}}
\def\scriptg{{\mathcal G}}
\def\scriptl{{\mathcal L}}
\def\scripto{{\mathfrak o}}
\def\scriptv{{\mathcal V}}
\def\frakg{{\mathfrak g}}
\def\frakG{{\mathfrak G}}

\def\ov{\overline}

\author{Siqi Fu}
\thanks
{The author was supported in part by NSF grant  DMS-1101678.}
\address{Department of Mathematical Sciences,
Rutgers University-Camden, Camden, NJ 08102}
\email{sfu@camden.rutgers.edu}
\title[]  
{Estimates of invariant metrics on pseudoconvex domains near boundaries with constant Levi ranks}
\maketitle

\begin{abstract} Estimates of the Bergman kernel and the Bergman and Kobayashi metrics on pseudoconvex domains
near boundaries with constant Levi ranks are given.
\bigskip

\noindent{{\sc Mathematics Subject Classification} (2000): 32F45; 32T27.}

\smallskip

\noindent{{\sc Keywords}: Bergman kernel; invariant metrics; Levi form; Levi foliation; plurisubharmonic function.}
\end{abstract}



\section{Introduction}\label{sec:intro}

In this paper we study boundary behavior of the Bergman kernel and invariant metrics
on pseudoconvex domains near boundaries whose Levi-forms have constant ranks.  Our main theorem
can be stated as follows:

\begin{theorem}\label{th:main}  Let $\Omega\subset\subset\C^n$ be a pseudoconvex
domain and let $z^0\in b\Omega$.  Let $\delta(z)$ be the Euclidean distance to the boundary. Let $K_\Omega(z, z)$ be the Bergman kernel and let $F_\Omega(z, X)$ be either the Bergman or the Kobayashi  metric. Assume that the boundary $b\Omega$ is smooth in a neighborhood $V$ of $z^0$ and its Levi-form has constant rank  $n-l-1$, $0\le l\le n-1$, for all $z\in V\cap b\Omega$.  Then there
exist a neighborhood $W\subset\subset V$ of $z^0$  and positive constants $C_1, C_2$, and $C_3$ such that
$$
C_1^{-1} |\delta(z)|^{-(n-l+1)}\le K_\Omega(z, z)\le C_1 |\delta(z)|^{-(n-l+1)}
$$
and
$$
C_2^{-1}(M(z, X)+C_3|X|^2)\le (F_\Omega(z, X))^2\le C_2 (M(z, X)+C_3|X|^2)
$$
for all $z\in W\cap \Omega$ and $X\in T^{1, 0}_z (\Omega)$,  where
$$
M(z, X)= \frac{|L_\delta(z, X)|}{|\delta (z)|}+\frac{|\langle \de \delta (z),
X\rangle|^2}{|\delta (z)|^2}.
$$

\end{theorem}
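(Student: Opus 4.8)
The plan is to reduce the problem, by a localization near $z^0$, to a model that is strongly pseudoconvex in $n-l$ of the complex variables and flat in the remaining $l$, and then to read the two‑sided bounds off from the classical estimates for strongly pseudoconvex domains. First I would exploit the constant‑rank hypothesis: the null space of the Levi form is an involutive subbundle of $T^{1,0}(b\Omega)$ over $V\cap b\Omega$, hence integrates, by the Frobenius theorem, to a smooth foliation by $l$‑dimensional complex submanifolds — the Levi foliation. After a holomorphic change of coordinates with $z^0=0$ and $z=(z',z'')\in\C^{n-l}\times\C^l$, I may assume that the leaves through nearby boundary points are the slices $\{z'=\mathrm{const}\}$; then the slices $\Omega\cap\{z''=\mathrm{const}\}$ are uniformly strongly pseudoconvex in $\C^{n-l}$, one has $\delta(z',z'')\approx\delta'(z')$ where $\delta'$ is the distance to the boundary of the slice, and a smooth local defining function $r$ can be chosen that is plurisubharmonic on $\Omega$ near $z^0$, with complex Hessian of rank $\ge n-l-1$ and pure $z''$‑ and mixed second derivatives that are $O(\delta)$ near $b\Omega$. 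I would also fix a bounded plurisubharmonic function $\lambda$ on $\Omega$ (available since $\Omega$ is bounded and pseudoconvex), to be used in the localization arguments.

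Next I would show that, for $z$ near $z^0$, $K_\Omega(z,z)$ and $(F_\Omega(z,X))^2$ are comparable to the corresponding quantities for $\Omega\cap U$, with $U$ a small ball about $z^0$. The upper bounds are the soft direction: $\Omega\cap U\subset\Omega$ gives $K_\Omega(z,z)\le K_{\Omega\cap U}(z,z)$, the Kobayashi metric is distance‑decreasing under inclusions, and the localization of the Bergman metric follows from its extremal characterization together with that of $K_\Omega$. The lower bounds use $\lambda$: the Ohsawa--Diederich--Herbort localization of the Bergman kernel yields $K_\Omega(z,z)\ge c\,K_{\Omega\cap U}(z,z)$, the analogous $\dbar$‑argument with a weight built from $\lambda$ and $-\log\delta$ localizes the Bergman metric from below, and the Kobayashi metric is bounded below through the Carathéodory metric, which localizes by the same $L^2$‑construction of bounded holomorphic functions.

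Having localized, I would trap $\Omega\cap U$ between product domains $\Omega'_-\times P''\subset\Omega\cap U\subset\Omega'_+\times P''$, where $P''\subset\C^l$ is a polydisc and $\Omega'_\pm\subset\C^{n-l}$ are uniformly strongly pseudoconvex (after a further harmless localization in $z''$). For the upper bounds I use monotonicity together with the product formulas $K_{\Omega'_+\times P''}=K_{\Omega'_+}\cdot K_{P''}$ and their analogues for the metrics; for the lower bounds I use the Ohsawa--Takegoshi extension theorem to lift the extremal functions of a slice $\Omega'_{z''}$ up to $\Omega\cap U$ with $L^2$‑norm controlled by the product structure. Finally I invoke the classical facts $K_{\Omega'_\pm}(z',z')\approx\delta_\pm(z')^{-(n-l+1)}$ and $(F_{\Omega'_\pm}(z',X'))^2\approx\frac{|L_{\delta_\pm}(z',X')|}{\delta_\pm(z')}+\frac{|\langle\de\delta_\pm(z'),X'\rangle|^2}{\delta_\pm(z')^2}$ for the Bergman and Kobayashi metrics of strongly pseudoconvex domains, together with $K_{P''}\approx 1$ and $(F_{P''}(z'',X''))^2\approx|X''|^2$; combined with $\delta(z',z'')\approx\delta_\pm(z')$, $L_\delta(z,X)=L_{\delta_\pm}(z',X')+O(\delta(z)\,|X|^2)$, and $\langle\de\delta(z),X\rangle=\langle\de\delta_\pm(z'),X'\rangle+O(\delta(z)\,|X|)$ near $b\Omega$ in the straightening coordinates, the product estimates become precisely the asserted bounds, with the $\C^l$‑contribution $|X''|^2$ and all bounded error terms absorbed into $C_3|X|^2$.

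I expect the main obstacle to be the lower bounds: one must manufacture genuine holomorphic $L^2$ functions — and, via bounded holomorphic functions, analytic discs — on $\Omega$ itself, not merely on the product model, with sharp control of the sup‑norm (resp.\ of the directional derivative) relative to the $L^2$‑norm. This forces the anisotropic geometry of the constant‑rank boundary to be encoded into the plurisubharmonic weights in Hörmander's $\dbar$‑estimate, or into the hypersurface and weight in Ohsawa--Takegoshi: the weight must charge the complex normal direction at order $\delta$ and the $n-l-1$ strictly pseudoconvex tangential directions at order $\sqrt{\delta}$, but must \emph{not} charge the $l$ Levi‑null directions, which is exactly what produces the exponent $n-l+1$ rather than $n+1$. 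By contrast the upper bounds are routine, resting only on monotonicity of the Bergman kernel, the decreasing property of the Kobayashi metric, and the sub‑mean value inequality applied to the polydisc‑type regions contained in $\Omega$.
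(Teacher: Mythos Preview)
Your overall strategy---localize, then sandwich $\Omega\cap U$ between product domains $\Omega'_{\pm}\times P''$ with $\Omega'_{\pm}$ strongly pseudoconvex in $\C^{n-l}$, and read off the estimates from the classical strongly pseudoconvex theory---is a genuinely different route from the paper's. The paper instead follows Catlin: for each boundary point $p$ and each scale $\delta$ it constructs a bounded plurisubharmonic function $g_{p,\delta}$ on $\Omega$ whose complex Hessian is $\gtrsim \delta^{-2}$ in the complex normal direction, $\gtrsim \delta^{-1}$ in the strictly pseudoconvex tangential directions, and $\gtrsim 1$ in the Levi--null directions (Theorem~\ref{th:psh}); this is then fed into Catlin's $L^2$ machinery (Theorem~\ref{th:Catlin}) for the Bergman kernel and metric, and into the Sibony metric (Proposition~\ref{prop:s}) for the Kobayashi lower bound. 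Your product--sandwich idea buys a more geometric picture and avoids the explicit psh construction; the paper's approach is more uniform in that all four estimates flow from a single object $g_{p,\delta}$, and it sidesteps the comparison of $\delta$, $L_\delta$, and $\langle\de\delta,\cdot\rangle$ across two different model domains $\Omega'_{\pm}$.

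There is, however, a genuine gap in your first step. You assert that after a single holomorphic change of coordinates the leaves of the Levi foliation through \emph{all} nearby boundary points become the slices $\{z'=\mathrm{const}\}$. That would force the foliation to be holomorphic (to admit local holomorphic first integrals), which is false in general in the $C^\infty$ category: each leaf is a complex submanifold, but the transverse parameter varies only smoothly. What the constant--rank hypothesis actually yields---and what the paper proves as Proposition~\ref{prop:foliation}---is, for each $p\in b\Omega$, a biholomorphism $\Phi_p$ depending smoothly on $p$ that straightens \emph{only} the leaf through $p$ and puts the defining function into the form
\[
\rho(\zeta)=\Re\zeta_1+\sum_{j=2}^{n-l}\lambda_j|\zeta_j|^2+O\bigl(|\zeta'|^2|\zeta''|+|\zeta'|^3+|\Im\zeta_1|\,|\zeta|\bigr).
\]
The error term, not identically zero in $\zeta''$, is precisely the obstruction to a global product splitting; but it is small enough on anisotropic polydiscs of radii $(\delta,\delta^{1/2},\ldots,\delta^{1/2},a,\ldots,a)$ that your sandwich $\Omega'_-\times P''\subset\Phi_p(\Omega\cap U')\subset\Omega'_+\times P''$ does go through, with $\Omega'_{\pm}$ small perturbations of the Siegel domain $\{\Re\zeta_1+\sum\lambda_j|\zeta_j|^2<0\}$, provided you first re--localize to a small $U'\ni p$. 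So your program can be salvaged, but the salvage is exactly the content of Proposition~\ref{prop:foliation}, and you should not claim that more than one leaf is straightened. A smaller point: your lower bound for the Kobayashi metric via the Carath\'eodory metric and an ``$L^2$ construction of bounded holomorphic functions'' is vague, since $L^2$ control does not give $L^\infty$ control; but once you have $\Omega\cap U'\subset\Omega'_+\times P''$, plain monotonicity of $F^K$ together with the product formula already delivers the lower bound, so no Carath\'eodory detour is needed.
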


Limiting asymptotic behavior on smoothly bounded,
strongly pseudoconvex domains in $\C^n$ was obtained by Diederich \cite{Diederich70} for the Bergman kernel and metric, and by Graham \cite{Graham75} for the Carath\'{e}odory and Kobayashi metrics. In a celebrated paper \cite{Fefferman74}, Fefferman obtained asymptotic formulas for the Bergman kernel and metric and used them to establish smooth extension of biholomorphic maps. Fefferman type asymptotic expansions for the Carath\'{e}odory and Kobayashi metrics on strongly pseudoconvex domains were given in \cite{Fu95} (see also \cite{Ma91} for related results). Estimates for the Bergman kernel and the invariant metrics,  in terms of big constants and small constants, were obtained by Catlin \cite{Catlin89} for smooth bounded pseudoconvex domains of finite type in $\C^2$ and by J.-H. Chen \cite{Chen89} and McNeal \cite{Mcneal94} for convex domains of finite type in $\C^n$.  We refer the reader to the monograph of Jarnicki and Pflug \cite{JarnickiPflug93} for extensive treatment on the subject.

Strong pseudoconvexity and Levi-flatness are in a sense at the opposite ends of pseudoconvexity. Theorem~\ref{th:main} shows that in terms of boundary behavior of the invariant metrics, these two types of domains bear  striking resemblance.  Our proof of Theorem~\ref{th:main} uses an idea of Catlin (see \cite{Catlin89}): We construct plurisubharmonic functions whose complex Hessians blow
up at the rate of $1/\delta^2(z)$ in the complex normal direction and at a rate of $|L_\delta(z, X)|/\delta(z)$ in the complex tangential directions. To estimate the Kobayashi metric from below, we also use the Sibony metric~\cite{Sibony81}.

This paper is organized as follows. In Section~\ref{sec:prelim}, we recall the necessary
definitions and basic properties of the Bergman kernel and the invariant metrics. In Section~\ref{sec:foliation},
we review the local foliation of a hypersurface with constant Levi rank and choose local holomorphic coordinates under which the defining function has a desirable form. Plurisubharmonic functions with large Hessians are constructed in Section~\ref{sec:psh}. Theorem~\ref{th:main} is proved in Section~\ref{sec:estimates}.

Throughout the paper, we will use $C$ to denote positive constants which may be different in different appearances. We will also use $f\gtrsim g$ to denote $f\ge Cg$ where $C$ is a constant independent of relevant parameters, and use $A\approx B$ to denote $A\gtrsim B$ and $B\gtrsim A$.

\section{Preliminaries}\label{sec:prelim}

Let $\Omega$ be a domain in $\C^n$ and let $\D$ be the unit disc.
Let $H(\D, \Omega)$ be the set of holomorphic maps from $\D$ into
$\Omega$. Let $z\in\Omega$ and let $X=\sum_{i=1}^{n} X_{i}\de /\de z_{i} \in T_{z}^{1,0}(\Omega)$. (We sometimes identify $T^{1,0}_z(\Omega)$ with $\C^n$ without explicit notices.) The Kobayashi metric is given by
$$
F_{\Omega}^{K}(z, X) = \inf \left\{ 1/\lambda; \enspace
\enspace f\in H(\D, \Omega), f(0)=z, f'(0)=\lambda X, \lambda > 0 \right\}.
$$
Let $A^2(\Omega)$  be the space of square-integrable holomorphic functions on $\Omega$.
The Bergman kernel (on the diagonal) and metric can be defined via the following extremal properties:
\begin{align*}
K(z,z)&=\sup\left\{ |f(z)|^2; \enspace f\in A(\Omega), \|
f\|_{L^2} \le 1\right\}\\
\intertext{and}
F^{B}_{\Omega}(z,X)&=\frac{\sup\left\{ |Xf(z)|; \enspace f\in A(\Omega), f(z)=0,
\|f\|_{L^2}\le 1\right\}}{K(z,z)^{1/2}}.
\end{align*}

Denote by $\scripts_z(\Omega)$ the class of functions $u$  defined on $\Omega$ such that:
(1)~$u$ is $C^2$ on a neighborhood of $z\in\Omega$; (2)~$0\le u\le 1$  on $\Omega$ and $u(z)=0$;
and (3)~$\log u$ is plurisubharmonic on $\Omega$. The Sibony metric \cite{Sibony81} is given by
$$
F^S_{\Omega}(z, X)=\sup\left\{ \left(L_u(z, X)\right)^{1/2};\enspace
u\in\scripts_z(\Omega)\enspace\right\}
$$
where
$$
L_u(z, X)=\sum_{j, k=1}^n \frac{\partial^2 u (z)}{\partial z_j \partial\bar z_k} X_j\overline{X}_k.
$$

The Bergman metric is a K\"{a}hler metric and the Kobayashi and Sibony metrics
 are Finsler metrics. Note that while the Kobayashi metric is always
 upper semi-continuous (\cite[Prop.~3 on p.~129]{Royden71}, the Sibony metric is not, even for a domain of holomorphy (see \cite[Example~4.2.10]{JarnickiPflug93}). Both the Kobayashi and Sibony metrics are identical to the Poincar\'{e} metric on the unit disk and
satisfy the following length decreasing property: If $\Phi\colon \Omega_1\to \Omega_2$  is a holomorphic map,
then
$$
F_{\Omega_1}(z, X)\ge F_{\Omega_2}(\Phi(z), \Phi_{*z}(X)).
$$
Furthermore, $F^S_\Omega(z, X)\le F^K_\Omega(z, X)$.

Catlin \cite{Catlin89} constructed bounded plurisubharmonic functions with large
Hessians and used them to estimate the Bergman kernel and invariant metrics for
smooth bounded pseudoconvex domains of finite type in $\C^2$.  The following theorem
was proved by Catlin (\cite[Theorem~6.1 and pp.~461-462]{Catlin89}),
using H\"{o}rmander's $L^2$--estimates for the $\bar\partial$-equation. We first fix
some notations.  Denote by $D_j^{\alpha_j}$ any mixed partial derivative in $z_j$ and $\bar
z_j$ of total order $\alpha_j$. For $\alpha=(\alpha_1, \ldots, \alpha_n)$,  write $D^\alpha\phi =D_1^{\alpha_1}\cdots D_n^{\alpha_n}\phi$.

\begin{theorem}[Catlin]\label{th:Catlin}  Let $\Omega\subset\subset \C^n$ be a smoothly
bounded pseudoconvex domain and let $\hat z \in \Omega$.   Let
$\beta_1,\beta_2,\cdots, \beta_n$  be given positive numbers. Assume that
there exists a function $\phi\in C^3(\overline{\Omega})$ such that
\begin{itemize}
\item[(1)]  $|\phi(z)|\le 1$, for $z\in \Omega$;
\item[(2)]  $\phi$ is plurisubharmonic in $\Omega$;
\item[(3)]  $P=\{ z\in \C^n;\enspace |z_j-\hat z_j|< \beta_j, 1\le j\le
n\,\}\subset\Omega$;
\item[(4)]  for all $z\in P$ and $X\in \C^n$,
$$
L_{\phi}(z, X)\gtrsim \sum_{j=1}^{n} {|X_j|^2\over \beta^2_j};
$$
\item[(5)]  for all $\alpha$ with $|\alpha|\le 3$ and $z\in P$,
$$
\left| D^\alpha \phi(z)\right| \lesssim \prod_{j=1}^{n}\beta^{-\alpha_j}.
$$
\end{itemize}
Then there exists a positive constant $C$, independent of $\hat z$,  such that
\begin{equation}\label{eq:catlin1}
 C^{-1}\prod_{j=1}^{n}\frac{1}{\beta^2_j} \le K_\Omega (\hat z, \bar{\hat z})\le C
\prod_{j=1}^{n}\frac{1}{\beta^{2}_j}
\end{equation}
and
\begin{equation}\label{eq:catlin2}
C^{-1}\left\{\sum_{j=1}^{n} {|X_j|^2\over \beta^2_j}\right\}^{1/2}\le F^B_\Omega(\hat
z, X)\le C \left\{\sum_{j=1}^{n} {|X_j|^2\over
\beta^2_j}\right\}^{1/2}.
\end{equation}
\end{theorem}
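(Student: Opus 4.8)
The plan is to prove the upper and lower bounds of (\ref{eq:catlin1}) and (\ref{eq:catlin2}) separately: the upper bounds are soft, coming from monotonicity of the Bergman kernel under inclusion, while the lower bounds are the substance and will come from H\"ormander's $L^2$-estimates for $\dbar$, with the weights manufactured out of the function $\phi$. The dilation $T\colon w\mapsto \hat z+\Lambda w$, $\Lambda=\mathrm{diag}(\beta_1,\dots,\beta_n)$, which carries the polydisc $P$ onto the unit polydisc, is used throughout so that every constant is independent of $\hat z$ and of the $\beta_j$: in the coordinates $w$, condition~(4) becomes a \emph{uniform} estimate $L_{\phi\circ T}(w,Y)\gtrsim|Y|^2$ on the unit polydisc, condition~(5) a uniform bound on the $C^3$-norm of $\phi\circ T$ there, while condition~(1) is scale invariant and makes $e^{\pm\phi}$ uniformly comparable to $1$; the dilated domain $\widetilde\Omega=T^{-1}(\Omega)$ is again pseudoconvex, contains the unit polydisc, and carries $\phi\circ T$ as a bounded plurisubharmonic function.

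For the upper bounds, restriction to $P\subset\Omega$ is norm-nonincreasing on $A^2(\Omega)$, so $K_\Omega(\hat z,\hat z)\le K_P(\hat z,\hat z)=\prod_j(\pi\beta_j^2)^{-1}$; the same restriction bounds the supremum defining the numerator of $F^B_\Omega(\hat z,X)$ by the corresponding quantity for the polydisc, which is $\approx K_P(\hat z,\hat z)^{1/2}\bigl(\sum_j|X_j|^2/\beta_j^2\bigr)^{1/2}$, and dividing by $K_\Omega(\hat z,\hat z)^{1/2}$ and using the lower bound of (\ref{eq:catlin1}) gives the upper bound in (\ref{eq:catlin2}).

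For the lower bound of (\ref{eq:catlin1}) I would, after dilating, build a holomorphic function on $\widetilde\Omega$ with prescribed value at $0$ and uniformly bounded $L^2$-norm. Fix a cutoff $\chi$ equal to $1$ near $0$, supported in the unit polydisc, with bounded derivatives, and work on a bounded pseudoconvex piece $\widetilde\Omega_R=\widetilde\Omega\cap B(0,R)$ with $R$ a fixed radius containing the unit polydisc. Use the plurisubharmonic weight $\varphi=\phi\circ T+(n+\varepsilon)\log|w|^2$: by~(2) it is plurisubharmonic; its complex Hessian dominates the Euclidean one on $\mathrm{supp}\,\dbar\chi$ by the rescaled~(4); it is bounded above by a uniform constant because $|w|<R$ together with~(1); and at $0$ it is singular enough that every $L^2(e^{-\varphi})$ solution of $\dbar u=0$ vanishes there. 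Solving $\dbar u=\dbar\chi$ by H\"ormander on $\widetilde\Omega_R$ and estimating the right-hand side with the Hessian bound and with~(1),~(5), one obtains $u$ with $\int_{\widetilde\Omega_R}|u|^2e^{-\varphi}\lesssim 1$; then $\tilde f=\chi-u$ is holomorphic, $\tilde f(0)=1$, $\|\tilde f\|^2_{L^2(\widetilde\Omega_R)}\lesssim 1$, so $K_{\widetilde\Omega_R}(0,0)\gtrsim 1$, and since $\phi\circ T$ furnishes a bounded plurisubharmonic function on $\widetilde\Omega$ a localization argument for the Bergman kernel gives $K_{\widetilde\Omega}(0,0)\gtrsim K_{\widetilde\Omega_R}(0,0)\gtrsim 1$; transporting back by the transformation rule under $T$ gives $K_\Omega(\hat z,\hat z)=(\det\Lambda)^{-2}K_{\widetilde\Omega}(0,0)\gtrsim\prod_j\beta_j^{-2}$. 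For (\ref{eq:catlin2}), given $X$ I would run the same scheme with $\dbar u=\dbar(\chi\,\ell)$, where $\ell(w)=\sum_j\overline{\widetilde X}_j\,w_j$ is the holomorphic linear form with $\ell(0)=0$ and directional derivative $\sum_j|X_j|^2/\beta_j^2$ at $0$ in the direction $\widetilde X=\Lambda^{-1}X$, taking the logarithmic term with a slightly larger coefficient so that $u$ vanishes to second order at $0$; the resulting $\tilde f$ has $\tilde f(0)=0$, directional derivative $\sum_j|X_j|^2/\beta_j^2$ at $0$, and $\|\tilde f\|^2_{L^2}\lesssim\sum_j|X_j|^2/\beta_j^2$, which together with (\ref{eq:catlin1}) and biholomorphic invariance of $F^B$ yields $F^B_\Omega(\hat z,X)\gtrsim\bigl(\sum_j|X_j|^2/\beta_j^2\bigr)^{1/2}$.

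The main obstacle is precisely the uniformity of the constants in $\hat z$ and in the possibly disparate, small scales $\beta_j$, and it is here that all five hypotheses are genuinely used: the dilation $T$ is what turns~(4) and~(5) into the scale-free input for H\"ormander's estimate, (1) controls $e^{\pm\phi}$, and the boundedness and plurisubharmonicity of $\phi$ supply the localization needed to work on the bounded $\widetilde\Omega_R$ rather than on $\widetilde\Omega$, whose diameter is not uniformly controlled. The delicate point is to choose the weight so that it stays plurisubharmonic on the whole pseudoconvex domain while its exponential remains comparable to a constant away from $0$ and does not blow up on $\mathrm{supp}\,\dbar\chi$; the factor $\prod_j\beta_j^2$ in the final bounds is just $|\det\Lambda|^2$, the Jacobian of the dilation, which explains why the estimates are homogeneous of exactly this degree.
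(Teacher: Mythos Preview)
The paper does not give its own proof of this theorem; it attributes the result to Catlin and sends the reader to \cite[Theorem~6.1 and pp.~461--462]{Catlin89}, remarking only that the proof uses H\"ormander's $L^2$-estimates for $\dbar$. Your outline is fully consistent with that description: the upper bounds via monotonicity under $P\subset\Omega$, the lower bounds via a $\dbar$-argument with a weight built from $\phi$ and a logarithmic pole, and the anisotropic dilation $T$ that converts hypotheses~(4) and~(5) into scale-free input, are exactly the right ingredients.

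One step, however, is more delicate than your write-up acknowledges. You solve $\dbar u=\dbar\chi$ on the truncated domain $\widetilde\Omega_R=\widetilde\Omega\cap B(0,R)$, where the weight $\varphi=\phi\circ T+(n+\varepsilon)\log|w|^2$ is bounded above (so $\|u\|_{L^2(\widetilde\Omega_R)}$ is indeed controlled), and then invoke ``a localization argument for the Bergman kernel'' to pass from $K_{\widetilde\Omega_R}(0,0)$ to $K_{\widetilde\Omega}(0,0)$. That passage is not a black box one can simply cite. Ohsawa's localization \cite{Ohsawa81} concerns boundary behavior, and the interior statement you need---that a bounded plurisubharmonic function with Hessian $\gtrsim\mathrm{id}$ on a fixed neighborhood of $0$ yields $K_{\widetilde\Omega}(0,0)\gtrsim K_{\widetilde\Omega\cap B(0,R)}(0,0)$ with a constant \emph{independent of} $\widetilde\Omega$---is essentially the lower bound you are trying to establish. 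If you attempt the obvious correction (cut off $\tilde f$ near $\partial B(0,R)$ and solve $\dbar$ once more on $\widetilde\Omega$ with weight $M\phi\circ T$), the new $\dbar$-data is supported where $|w|\approx R$, outside the unit polydisc, and there you have no lower bound on $L_{\phi\circ T}$; the curvature form of H\"ormander's estimate then gives nothing. Equally, if you solve directly on $\widetilde\Omega$ from the start, the factor $|w|^{2(n+\varepsilon)}$ in $e^{\varphi}$ is of size $(\mathrm{diam}\,\widetilde\Omega)^{2(n+\varepsilon)}\sim(\min_j\beta_j)^{-2(n+\varepsilon)}$, which destroys uniformity when you strip the weight. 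This is precisely the obstacle you flag in your last paragraph, but the sentence ``the boundedness and plurisubharmonicity of $\phi$ supply the localization'' does not yet resolve it; making this step rigorous is the technical heart of Catlin's argument.
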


The following proposition shows that same estimates also hold for
the Kobayashi and Sibony metrics:

\begin{proposition}\label{prop:s}
Let $\Omega$ be a domain in $\C^n$ and let
$\hat z\in \Omega$.  Let $\beta_1,\beta_2,\cdots, \beta_n$  be given
positive numbers.  Assume that there exists a function $\phi\in
\scriptc^2(\Omega)$ such that
\begin{itemize}
\item[(1)]  $|\phi(z)|\le 1$, for $z\in \Omega$;
\item[(2)]  $\phi$ is plurisubharmonic in $\Omega$;
\item[(3)]  $P=\{ z\in \C^n;\enspace |z_j-\hat z_j|< \beta_j, 1\le j\le
n\,\}\subset\Omega$;
\item[(4)]  for all $z\in P$ and $X\in \C^n$,
$$
L_{\phi}(z, X)\gtrsim \sum_{j=1}^{n} {|X_j|^2\over \beta^2_j}.
$$
\end{itemize}
Then there exists a positive constant $C$, independent of $\hat z$,  such
that
\begin{equation}\label{eq:s}
C^{-1}\left\{\sum_{j=1}^{n} {|X_j|^2\over \beta^2_j}\right\}^{1/2}\le F_\Omega(\hat
z, X)\le C \left\{\sum_{j=1}^{n} {|X_j|^2\over
\beta^2_j}\right\}^{1/2}
\end{equation}
where $F_\Omega$ is either the Kobayashi or the Sibony metric.
\end{proposition}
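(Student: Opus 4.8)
The plan is to prove the two-sided estimate for the Sibony metric first, and then deduce the corresponding estimate for the Kobayashi metric using the inequalities $F^S_\Omega \le F^K_\Omega$ and the length-decreasing property against a well-chosen comparison map. For the upper bound on the Kobayashi metric, I would exploit the polydisc $P\subset\Omega$ from hypothesis~(3): the inclusion $P\hookrightarrow\Omega$ is holomorphic, so $F^K_\Omega(\hat z, X)\le F^K_P(\hat z, X)$, and since $P$ is a product of discs of radii $\beta_j$ centered at $\hat z$, its Kobayashi metric at the center is exactly $\bigl(\sum_j |X_j|^2/\beta_j^2\bigr)^{1/2}$ (the Kobayashi metric of a polydisc is the maximum of the coordinate Poincar\'e metrics, which is comparable to this $\ell^2$-expression up to a dimensional constant). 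This immediately gives the right-hand inequality in \eqref{eq:s} for the Kobayashi metric, and — since $F^S_\Omega\le F^K_\Omega$ — for the Sibony metric as well.

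**For the lower bound**, which is the substantive direction, the natural move is to build a competitor in the defining class $\scripts_{\hat z}(\Omega)$ out of the given $\phi$. Set $u = c\,e^{\phi - \phi(\hat z)}\cdot e^{Q(z)}$ where $Q$ is a nonpositive plurisubharmonic (indeed, pluriharmonic-after-exponentiation) modification chosen so that $u(\hat z)=0$ and $0\le u\le 1$ on all of $\Omega$; more concretely I would take something like
\begin{equation*}
u(z) = \exp\Bigl(\phi(z) - A + \textstyle\sum_{j=1}^n \frac{|z_j - \hat z_j|^2 - \beta_j^2}{\beta_j^2}\Bigr)
\end{equation*}
with $A$ a large constant to force $u\le 1$, if necessary further multiplied by a bounded plurisubharmonic factor vanishing at $\hat z$. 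Wait — more carefully: $\log u = \phi + \sum_j (|z_j-\hat z_j|^2-\beta_j^2)/\beta_j^2 - A$ is plurisubharmonic as a sum of a plurisubharmonic function and $|z_j|^2$-terms; it is $\le 0$ on $\Omega$ once $A$ is large (using $|\phi|\le 1$ and, where the quadratic term is unbounded, using that $\Omega$ is bounded so $\sum_j |z_j-\hat z_j|^2$ is bounded — one picks $A$ accordingly, and if that fails one instead localizes by adding $\log$ of a bump); and at $z=\hat z$ we have $\log u(\hat z) = \phi(\hat z) - \sum_j 1 - A$, which is very negative but not $-\infty$, so one rescales by subtracting this constant inside the exponential so that $u(\hat z)=0$ exactly while keeping $u\le 1$. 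Then $u\in\scripts_{\hat z}(\Omega)$, and since $\log u(\hat z)$ is finite, the complex Hessian of $u$ at $\hat z$ equals $u(\hat z)\cdot(\text{stuff}) + e^{\log u(\hat z)}\cdot\partial\bar\partial(\log u)(\hat z)$ — but $u(\hat z)=0$ kills the first piece only if we are careful. The cleaner route: one knows $L_u(\hat z, X) \ge$ (bounded factor)$\cdot\bigl(L_\phi(\hat z, X) + \sum_j |X_j|^2/\beta_j^2\bigr) \gtrsim \sum_j |X_j|^2/\beta_j^2$ by hypothesis~(4), provided the exponential prefactor at $\hat z$ is bounded below by a positive constant independent of $\hat z$ — which is exactly why the term $-\sum_j \beta_j^2/\beta_j^2 = -n$ (a \emph{universal} constant) is used rather than something $\hat z$-dependent. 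This yields $F^S_\Omega(\hat z, X) \ge (L_u(\hat z, X))^{1/2} \gtrsim \bigl(\sum_j |X_j|^2/\beta_j^2\bigr)^{1/2}$, and then $F^K_\Omega\ge F^S_\Omega$ gives the lower bound for both metrics, completing \eqref{eq:s}.

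**The main obstacle** is ensuring that the normalizing constant in front of the exponential — the quantity $e^{\log u(\hat z)}$ or, equivalently, the value of the prefactor at $\hat z$ — is bounded \emph{below by a positive constant independent of $\hat z$}; this is what makes the constant $C$ in \eqref{eq:s} uniform. The subtlety is that $u(\hat z)=0$ is required by membership in $\scripts_{\hat z}(\Omega)$, which seems to clash with wanting a positive Hessian contribution scaled by $u(\hat z)$. The resolution (and this is the genuinely delicate point to get right) is that one does \emph{not} scale so that $u(\hat z)=0$ in a way that sends the relevant prefactor to zero; instead one observes that for $w\in\scripts_{\hat z}(\Omega)$ with $w(\hat z)=0$ one still has $L_w(\hat z,X)$ finite and, writing $w = $ (smooth nonnegative function vanishing at $\hat z$), the Hessian $L_w(\hat z,X) = $ Hessian of $w$ there, which for $w$ built as $\exp(\psi) - \exp(\psi(\hat z))$-type constructions picks up $\exp(\psi(\hat z))\,L_\psi(\hat z,X)$ plus a rank-one term — and $\exp(\psi(\hat z))$ is controlled below precisely because all the $\hat z$-dependence in $\psi(\hat z)$ has been arranged to be a bounded (indeed constant) quantity. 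I would carry this out by first reducing, via hypothesis~(3), to working on the fixed polydisc $P$ (where $\phi$ is genuinely bounded with no help from boundedness of $\Omega$), defining $\psi = \phi + \sum_j(|z_j-\hat z_j|^2-\beta_j^2)/\beta_j^2$ on $P$ so that $\psi\le C_0$ uniformly and $\psi(\hat z) = \phi(\hat z) - n \ge -1-n$, extending appropriately to $\Omega$, and setting $u = e^{\psi - C_0} - e^{\psi(\hat z) - C_0}\cdot(\text{cutoff})$ or an analogous construction; then the lower Hessian bound at $\hat z$ carries a factor $\gtrsim e^{-1-n-C_0}$, uniform in $\hat z$, and the proof closes.
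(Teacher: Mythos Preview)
Your upper-bound argument via the polydisc inclusion is correct and matches the paper. The gap is in the lower bound: none of your proposed competitors actually lies in $\scripts_{\hat z}(\Omega)$. The function $u = \exp\bigl(\phi + \sum_j(|z_j-\hat z_j|^2-\beta_j^2)/\beta_j^2 - A\bigr)$ has $u(\hat z) = e^{\phi(\hat z) - n - A} > 0$, and no finite additive shift in the exponent fixes this---an exponential of a finite quantity is never zero. Subtracting $u(\hat z)$ afterward, or your variant $u = e^{\psi - C_0} - e^{\psi(\hat z) - C_0}\cdot(\text{cutoff})$, destroys the log-plurisubharmonicity required by $\scripts_{\hat z}(\Omega)$. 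Separately, the quadratic $g(z) = \sum_j |z_j - \hat z_j|^2/\beta_j^2$ is unbounded on $\Omega$ (no boundedness of $\Omega$ is assumed in the proposition), so $e^g$ cannot be forced below $1$ globally; and even were $\Omega$ bounded, the constant $A$ you would need scales like $(\sup_\Omega g)$, which depends on $\hat z$ through the $\beta_j$ and ruins the uniformity of $C$.

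The paper's device (following Sibony) resolves all of these issues simultaneously: take $u(z) = \chi(g(z))\,\exp\bigl(M(\phi(z)-1)\bigr)$, where $\chi\colon[0,\infty)\to[0,1]$ is smooth and concave with $\chi(t)=t$ for $t\le 1/2$ and $\chi(t)=1$ for $t\ge 1$. Then $\chi(g)\le 1$ everywhere (solving the boundedness problem with no hypothesis on $\Omega$), $\chi(g(\hat z))=\chi(0)=0$ gives $u(\hat z)=0$ by \emph{multiplication} rather than subtraction, and $\log u = \log\chi(g) + M(\phi - 1)$ is plurisubharmonic once $M$ is large enough to absorb the bounded negativity of $t(\log\chi)''(t)+(\log\chi)'(t)$ on the transition region $\{1/2\le g\le 1\}$---this is precisely where hypothesis~(4) is used. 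At $\hat z$ one reads off $L_u(\hat z, X) = e^{M(\phi(\hat z)-1)}L_g(\hat z,X) \ge e^{-2M}\sum_j|X_j|^2/\beta_j^2$, with $M$ a universal constant. You brush past this with ``localizes by adding $\log$ of a bump'', which is indeed the correct idea, but the constructions you actually pursue do not close.
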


\begin{proof}  It suffices to establish the upper bounded in \eqref{eq:s}
for the Kobayashi metric  and the lower bound for the Sibony metric.
The upper bound follows directly by comparing the Kobayashi metric on $\Omega$
with that on the polydisc $P$ and using the length decreasing property.
We now prove the lower bound for the Sibony metric,
following \cite{Sibony81}.

Let $\chi(t)\colon \R^+\to [0, 1]$ be a cut--off function such that
$\chi\in C^2(\R^+)$,  $\chi(t)=t$ for $t\in [0, 1/2]$, $\chi(t)=1$ for
$t\in [1, \infty)$, and $\chi''(t)\le 0$ for $t\in \R^+$.   It is easy to see
that there exists a positive constant $\alpha$ so that
$$
t(\log\chi)''(t) +\chi'(t)\ge -\alpha
$$
for $t\in [1/2, \ 1]$.

Let
\begin{align*}
g(z)&=\sum_{j=1}^{n} {|z_j-\hat z_j|^2\over \beta_j^2}\\
\intertext{and}
u(z)&=\chi (g(z))\exp (M(\phi(z)-1))
\end{align*}
where $M$ is a large constant to be chosen.   Then $u(\hat z)=0$ and $0\le
u\le 1$ in $\Omega$. Evidently, $\log u$ is plurisubharmonic when $g(z) <1/2$ or when $g>1$.
We now consider the case when $1/2\le g(z)\le 1$.  A simple computation yields that
\begin{align*}
L_{\log u}(z, X)&=L_{\log \chi(g)}(z, X) + M L_\phi(z, X)\notag\\
     &=(\log \chi)''(g(z))\left| \langle \de g(z), X\rangle \right|^2  +(\log \chi)'(g(z)) L_g(z, X) + M L_\phi(z, X).
\end{align*}
Since
$$
\left| \langle \de g(z), X\rangle \right|^2\le g(z)\sum_{j=1}^{n}
{|X_j|^2\over \beta^2_j}
$$
and
$$
L_g(z, X)=\sum_{j=1}^{n} {|X_j|^2\over \beta^2_j},
$$
it follows that
$$
L_{\log u}(z, X)\ge \left( MC -\alpha\right) \sum_{j=1}^{n} {|X_j|^2\over
\beta^2_j},
$$
where $C$ is a  positive constant. Choosing $M\ge \alpha/C$, we then obtain that $\log u$ is plurisubharmonic on $\Omega$.  From the definition  of the Sibony metric, we then have
\begin{align*}
\left( F^S_\Omega(\hat z, X)\right)^2 &\ge \exp(M(\phi(\hat z)-1))\sum_{j=1}^{n}
{|X_j|^2\over \beta^2_j}\\
&\ge e^{-2M}\sum_{j=1}^{n} {|X_j|^2\over \beta^2_j}.
\end{align*}
We thus conclude the proof of \eqref{eq:s}.\end{proof}

\section{Levi foliations of real hypersurfaces}\label{sec:foliation}
\bigskip
We first recall well-known facts about local foliations of hypersurfaces whose Levi form has
constant rank, following \cite{Freeman74}.  Let $M$ be a smooth real hypersurface in $\C^n$. Let $z^0\in M$ and let $r(z)$ be a local defining function of $M$ on a neighborhood $V$ of $z^0$.
The {\it Levi rank} of $M$ at $z^0$, denoted by $R(M, z^0)$,  is
the number of non-zero eigenvalues of the  Levi form
\[
L_r(z^0; X, Y)=\sum_{j, k=1}^n \frac{\partial^2 r(z^0)}{\partial z_j\partial \bar z_k}X_j\overline{Y}_k
\]
for $X, Y\in T^{1, 0}_{z^0}(M)$.  The {\it Levi nullspace} $\scriptn_{z^0}$ of $M$ at $z^0$ is given by
$$
\scriptn_{z^0}=\left\{ X\in T^{1, 0}_{z^0}(M);\enspace L_r(z^0; X, Y)=0 \text{ for all } Y\in T^{1, 0}_{z^0}(M)\right\}.
$$
Thus $R(b\Omega, z^0)=n-1-\dim_{\C} \scriptn_{z^0}$.   Note that both $R(b\Omega, z^0)$ and
$\dim_{\C} \scriptn_{z^0}$  are independent of the choices of the defining functions or
local holomorphic coordinates.   A {\it complex foliation} of (complex) codimension
$q$ of $M\cap V$ is a set $\scriptf$ of complex submanifolds
of $V$ such that there exists a smooth map $\sigma\colon V\to
\R^{2q}$ of rank $2q$ on $M$ satisfying:
\begin{itemize}
\item[(1)]  $M\cap V=\left\{ z\in V;\enspace \sigma_1(z)=0\,\right\}$;
\item[(2)]  $\scriptf=\cup \left\{ M_c;\enspace c=(c_{2}, \ldots, c_{2q})\in \R^{2q-1}\right\}$, where
  $$M_c =\left\{ z\in M\cap V;\enspace \sigma_j(z)=c_j,  2\le j\le 2q\right\}.$$
\end{itemize}
Each $M_c$ is a {\it leaf} of the foliation. Note that $\scriptf$ is a complex foliation of codimension $n-l$ of $M\cap V$
defined by $\sigma=(\sigma_1,\ldots, \sigma_{2(n-l)})$  if and only if
for each $z\in M\cap V$,
there exists a neighborhood $U\subset V$ of $z$ and
holomorphic functions $f_{n-l+1}, \ldots, f_n$  on $U$ such that
\begin{itemize}
\item[(1)]  the map $F=(\sigma_1,\ldots, \sigma_{2(n-l)}, f_{n-l+1},\ldots,
f_n)$  is a diffeomorphism from  $U$  onto an open subset
$W=I\times I'\times W''$ of $\R\times \R^{2(n-l)-1}\times \C^l$;
\item[(2)]  $G=F^{-1}$ maps $\{0\}\times I'\times W''$ onto $M\cap U$;
\item[(3)]  $G_c(\cdot)=G(0, c, \cdot)$ is holomorphic on $W''$ for each
$c\in I'$.
\end{itemize}
(See \cite[Section~2]{Freeman74}.) The following theorem is well-known (cf. \cite[Theorem 6.1]{Freeman74}):

\begin{theorem}[cf. \cite{Freeman74}]\label{th:freeman}  Let $M$ be a real hypersurface in $\C^n$.
Suppose $M$ has constant constant Levi rank $n-l-1$.  Then for each $p\in M$,
there exists a neighborhood $V$ of $p$ and a unique complex foliation of
codimension $n-l$ of $M\cap  V$  such that $\scriptn_z$ is the
complex tangent space of  leaves of the foliation.
\end{theorem}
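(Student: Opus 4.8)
The plan is to reduce the statement to the classical Frobenius integrability theorem by exhibiting the Levi distribution $z\mapsto\scriptn_z$ as an involutive subbundle of $T^{1,0}(M)$ whose leaves are automatically complex submanifolds. First I would verify that $z\mapsto\scriptn_z$ is a smooth subbundle of rank $l$: since the Levi form $L_r(z;\cdot,\cdot)$ on $T^{1,0}_z(M)$ has constant rank $n-l-1$ on $M\cap V$, the kernel has constant dimension $l$, and smoothness of a constant-rank Hermitian form's kernel is standard (one can select, near each point, $l$ local $(1,0)$ vector fields spanning it by a Gram--Schmidt/minor argument). The uniqueness claim in the theorem is then immediate: any foliation whose leaves have $\scriptn_z$ as their complex tangent spaces must have those leaves as the integral manifolds of this fixed subbundle, so it is determined.

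The heart of the matter is involutivity. I would show that $\scriptn=\bigcup_z\scriptn_z$, viewed as a real distribution of rank $2l$ inside $T_{\mathbb C}M\cap \overline{T_{\mathbb C}M}$ (equivalently, that $\scriptn\oplus\overline{\scriptn}$ is involutive as a complex distribution), is closed under Lie bracket. The computational input is the identity expressing the Levi form as $L_r(z;X,\overline Y)$-type pairing against $\partial r$ applied to $[X,\overline Y]$ for sections $X,Y$ of $T^{1,0}(M)$; the vanishing of $L_r$ on $\scriptn_z$ for all $Y\in T^{1,0}_z(M)$ forces the appropriate components of $[X,\overline Y]$ to stay tangent, and a short argument using that $M$ is a real hypersurface (so $T_{\mathbb C}M$ has complex corank one and the obstruction to involutivity of $T^{1,0}(M)\cap T_{\mathbb C}M$ is exactly the Levi form) upgrades this to bracket-closedness of $\scriptn\oplus\overline\scriptn$. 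By the complex (Newlander--Nirenberg-flavored, or simply real) Frobenius theorem applied to this involutive complex distribution of dimension $2l$, through each $p\in M$ there is an integral manifold $M_p$ which is a complex submanifold of $\C^n$ of dimension $l$, and these fit together into a foliation.

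Finally I would record the foliation in the coordinate form demanded by the equivalence stated just before the theorem: the Frobenius theorem furnishes, near each $p$, coordinates in which the leaves are slices, and by choosing the leaf coordinates to be holomorphic (which is possible because the distribution is $\overline\partial$-closed, hence the transverse complex structure descends) one obtains a diffeomorphism $F=(\sigma_1,\dots,\sigma_{2(n-l)},f_{n-l+1},\dots,f_n)$ with $f_{n-l+1},\dots,f_n$ holomorphic and $M\cap V=\{\sigma_1=0\}$, so that conditions (1)--(3) in the local description of a complex foliation hold. The main obstacle I anticipate is the involutivity computation: one must carefully track which components of the bracket $[X,\overline Y]$ are controlled by $L_r=0$ on the nullspace, and confirm that no extra terms involving $\partial r$ or $\overline\partial r$ survive — this is where the hypersurface hypothesis and the symmetry of the Levi form are used essentially, and it is the step most prone to sign and bookkeeping errors. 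Since this is a well-known result of Freeman, I would cite \cite[Theorem~6.1]{Freeman74} for the details of that verification rather than reproduce them in full.
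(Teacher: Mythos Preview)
The paper does not supply its own proof of this theorem: it is simply quoted as a known result with the citation ``cf.~\cite[Theorem~6.1]{Freeman74}.'' Your sketch via involutivity of the Levi null distribution and the Frobenius theorem is the standard route and is essentially what Freeman does, so it is correct and consistent with the cited source; since the paper treats this as background, you could likewise just cite \cite{Freeman74} rather than reproduce the argument.
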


Let $\Omega\subset\subset\C^n$ be a domain and let $z^0\in b\Omega$.  Assume that
$b\Omega$ is smooth in a neighborhood $V$ of $z^0$ and that $R(b\Omega, z)=n-l-1$ for
all $z\in b\Omega\cap V$.  Let
$$
r(z)=\begin{cases}-\dist(z, b\Omega),\quad z\in \Omega;\\
             \quad \dist(z, b\Omega), \quad z\not\in \Omega.\\
             \end{cases}
$$
After a possible shrinking of $V$,  we may assume that $r(z)\in C^{\infty}(V)$.
For $z=(z_1,\ldots, z_n)$,  we write $\tilde z =(z_2, \ldots, z_n)$, $z'=(z_2, \ldots, z_{n-l})$,
and $z''=(z_{n-l+1}, \ldots, z_n)$.  The following proposition is a variation of Lemma~3.3.2 in \cite{Catlin80}.

\begin{proposition}\label{prop:foliation} Let $\Omega\subset\subset\C^n$ be a pseudoconvex
domain and let $z^0\in b\Omega$.  Assume that $b\Omega$  is smooth in a neighborhood
$V$ of $z^0$ and that $R(b\Omega, z)=n-l-1$  for all $z\in b\Omega\cap V$.  Then
there exists  a neighborhood $U\subset\subset V$  of $z^0$  such that for
each $p\in b\Omega\cap U$,  there is a biholomorphism mapping $\zeta=\Phi_p(z)$
from $U$ onto the unit ball $B(0, 1)$ that satisfies
\begin{itemize}
\item[(1)]  $\Phi_p(p)=0$;
\item[(2)]  $\Phi_p$ depends smoothly on $p$;
\item[(3)]  $\Phi_p(b\Omega\cap U)$ has a defining function of the form
\begin{equation}\label{eq:rho1}
\rho(\zeta)=\Re \zeta_1 + \sum_{j=2}^{n-l} \lambda_j |\zeta_j|^2
+O(|\zeta'|^2\cdot |\zeta''|+|\zeta'|^3 + |\Im \zeta_1|\cdot |\zeta| )
\end{equation}
near 0,  where $\lambda_j$, $2\le j\le n-l$, are positive constants depending
smoothly on p.
\end{itemize}
\end{proposition}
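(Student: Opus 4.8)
The plan is to build $\Phi_p$ as a composition of biholomorphic changes of coordinates, with the Levi foliation of Theorem~\ref{th:freeman} as the essential ingredient. Start with the elementary reductions at a fixed $p$ (taken to be the origin). A translation and a unitary change make $\partial r(0)$ a positive multiple of $d\zeta_1$, so after rescaling, $r=\Re\zeta_1+O(|\zeta|^2)$. A $\C$-linear change among $\zeta_2,\dots,\zeta_n$ brings the Levi form of $r$ at $0$, restricted to $T^{1,0}_0(b\Omega)=\{\zeta_1=0\}$, to the shape $\sum_{j,k=2}^{n-l}b_{j\bar k}\zeta_j\bar\zeta_k$ with $\mathcal{N}_0=\{\zeta_1=\dots=\zeta_{n-l}=0\}$ (possible since $\dim_{\C}\mathcal{N}_0=l$), where $(b_{j\bar k})_{j,k=2}^{n-l}$ is positive definite by pseudoconvexity; a further linear change diagonalizes it as $\sum_{j=2}^{n-l}\lambda_j|\zeta_j|^2$ with $\lambda_j>0$. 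Finally a holomorphic substitution $\zeta_1\mapsto\zeta_1+(\text{holomorphic quadratic})$, together with multiplication of $r$ by a positive function, removes the pluriharmonic quadratic terms and absorbs the mixed quadratic terms containing $\zeta_1$ into $\Re\zeta_1$ and into the error. This gives $r=\Re\zeta_1+\sum_{j=2}^{n-l}\lambda_j|\zeta_j|^2+O(|\zeta|^3+|\Im\zeta_1|\,|\zeta|)$.

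Next, straighten the Levi leaf through the origin. By Theorem~\ref{th:freeman} this leaf is a complex $l$-dimensional submanifold with tangent space $\mathcal{N}_0$; it is the graph of a holomorphic map $\zeta''\mapsto(\zeta_1,\zeta')$ vanishing to second order at $0$, and subtracting this graph (a biholomorphism that is the identity in $\zeta''$ and tangent to the identity at $0$) turns the leaf into $\{\zeta_1=\dots=\zeta_{n-l}=0\}$. Since the leaf lies in $b\Omega$ we get $r(0,0,\zeta'')\equiv0$, so every Taylor coefficient of $r$ at $0$ corresponding to a monomial in $\zeta'',\bar\zeta''$ alone vanishes; hence $r=\Re\zeta_1+\sum_{j=2}^{n-l}\lambda_j|\zeta_j|^2+O\big((|\zeta_1|+|\zeta'|)|\zeta|^2+|\Im\zeta_1|\,|\zeta|\big)$.

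The main step is to improve this error to $O(|\zeta'|^2|\zeta''|+|\zeta'|^3+|\Im\zeta_1|\,|\zeta|)$, that is, to kill the remaining cubic and higher terms that are linear in $\zeta'$ and of degree $\ge2$ in $\zeta''$, and the terms $\Re\zeta_1\cdot(\text{degree}\ge2)$. For this one uses the finer structure from the description preceding Theorem~\ref{th:freeman}: near $0$ the leaves are graphs $\zeta_{\le n-l}=\psi_c(\zeta'')$ with $\psi_c$ holomorphic in $\zeta''$ and $\psi_0\equiv0$, so for each fixed $\zeta''$ the slice $\{(\zeta_1,\dots,\zeta_{n-l}):(\zeta_1,\dots,\zeta_{n-l},\zeta'')\in b\Omega\}$ is a hypersurface $S_{\zeta''}$ in $\C^{n-l}$ which is strongly pseudoconvex near $0$ (by positivity of $(\lambda_j)$ and continuity), and $r(\cdot,\cdot,\zeta'')$ is one of its defining functions. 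Building from the graph functions $\psi_c$ a holomorphic change of $(\zeta_1,\dots,\zeta_{n-l})$ that depends holomorphically on $\zeta''$ — of the form $\zeta_a\mapsto\sum_bA_{ab}(\zeta'')\zeta_b+B_a(\zeta'')$ with $A(0)=I$, $B(0)=0$ — normalizes all the slices $S_{\zeta''}$ simultaneously; the identity $r(\psi_c(\zeta''),\zeta'')\equiv0$, differentiated in the leaf directions and in the parameter $c$, is what forces the bad terms to vanish and not be reintroduced. This is precisely where the integrability of $\mathcal{N}$ — i.e. the constant-Levi-rank hypothesis — enters, and not merely pseudoconvexity; here we follow and adapt Catlin's Lemma~3.3.2 in \cite{Catlin80}.

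Finally, the foliation and its leaf through $p$, the straightening map, and all the coordinate changes depend smoothly on $p\in b\Omega\cap U$ (the defining function $r$ and the Levi foliation do, by Theorem~\ref{th:freeman} and smoothness of $b\Omega$), so the composite biholomorphism and the constants $\lambda_j=\lambda_j(p)>0$ depend smoothly on $p$; a concluding dilation and a shrinking of $U$ map the coordinate neighborhood onto $B(0,1)$, yielding $\Phi_p$ with properties (1)--(3). The hard part is the third step: producing the single $\zeta''$-holomorphically-varying change of $(\zeta_1,\dots,\zeta_{n-l})$ that normalizes every slice $S_{\zeta''}$ at once — exploiting the holomorphic-graph structure of the leaves to cancel the mixed $\zeta'$--$\zeta''$ terms without creating worse ones — while keeping all estimates uniform and smoothly dependent on $p$.
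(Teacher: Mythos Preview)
Your first two moves (elementary normalization at $p$, then straightening the single leaf through $p$ to $\{\zeta_1=\dots=\zeta_{n-l}=0\}$) are essentially the paper's moves in the opposite order; the paper straightens first via the map $\widehat\Psi_p(z)_j=z_j-G_j(0,c_p,g(z''))$ for $j\le n-l$, then rotates, diagonalizes the Levi block, and removes the pluriharmonic quadratic by $\zeta_1=\xi_1+2\sum\partial^2\hat f(0)/\partial\xi_j\partial\xi_k\,\xi_j\xi_k$. Either order is fine.

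The real divergence is your ``main step''. You propose a \emph{further} biholomorphism, linear in $(\zeta_1,\dots,\zeta_{n-l})$ with coefficients $A_{ab}(\zeta''),B_a(\zeta'')$ holomorphic in $\zeta''$, built from the family of leaf graphs $\psi_c$, in order to normalize all the strictly pseudoconvex slices $S_{\zeta''}$ simultaneously. The paper does \emph{not} introduce any such additional change. Instead, having already arranged $f(\tilde\zeta)=\sum_{j=2}^{n-l}\lambda_j|\zeta_j|^2+O(|\tilde\zeta|^3)$ together with $f(0,\zeta'')\equiv 0$, it simply Taylor-expands $f$ in $\zeta'$ at the varying base point $(0,\zeta'')$ and compares the second-order coefficients with those at $0$: this gives $\partial^2 f/\partial\zeta_j\partial\bar\zeta_k(0,\zeta'')=\lambda_j\varepsilon_{jk}+O(|\zeta''|)$ and $\partial^2 f/\partial\zeta_j\partial\zeta_k(0,\zeta'')=O(|\zeta''|)$, hence $f=\sum\lambda_j|\zeta_j|^2+O(|\zeta'|^2|\zeta''|+|\zeta'|^3)$. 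So the constant-rank hypothesis is used only once---to straighten the single leaf through $p$---and the mixed error is then read off by comparing two Taylor expansions, not by performing another coordinate change.

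Your proposed step is also not obviously implementable as stated. The leaves are indexed by a \emph{real} parameter $c\in\R^{2(n-l)-1}$; each $\psi_c$ is holomorphic in $\zeta''$, but the dependence on $c$ is only smooth, and ``differentiating $r(\psi_c(\zeta''),\zeta'')\equiv 0$ in the parameter $c$'' produces smooth, not holomorphic, data in $\zeta''$. There is no evident way to extract from this a matrix $A(\zeta'')$ holomorphic in $\zeta''$ that kills terms like $\zeta_j\,h_j(\zeta'',\bar\zeta'')$ with $h_j$ merely smooth. In short, you are proposing machinery that is both heavier than what the paper uses and not clearly constructible; the paper's Taylor-comparison at $(0,\zeta'')$ is the device you should use in place of your third step.
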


\begin{proof}  By Theorem~\ref{th:freeman}, there exists a complex foliation
$\scriptf$ of codimension $n-l$ for $b\Omega\cap V$.  Assume that the foliation
$\scriptf$ is defined by $\sigma =(\sigma_1,\ldots, \sigma_{2(n-l)})$.  Then
there exist a neighborhood $U\subset\subset V$
of $z^0$ and holomorphic functions $f_{n-l-1}, \cdots, f_n$ on $U$ such that
\begin{itemize}
\item[(1)]  the map $F=(\sigma_1, \ldots, \sigma_{2(n-l)},
f_{n-l+1},\ldots, f_n)$  is a diffeomorphism from $U$ onto an open subset
$W=I\times I'\times W''$ of $\R\times\R^{2(n-l)-1}\times\C^l$;
\item[(2)] $G=F^{-1}$ maps $\{0\}\times I'\times W''$ onto
$b\Omega\cap U$;
\item[(3)]  $G_c(w'')=G(0, c, w'')$ is holomorphic on
$w''=(w_{n-l+1},\ldots, w_n)\in W''$  for each $c\in I'$.
\end{itemize}
For each $z\in V$,   we write $c_z =(\sigma_2(z),\ldots, \sigma_{2(n-l)}(z))$.
Since the rank of the matrix
$$
\left( {\de G_k\over \de w_j}(0, c_{z^0}, w'')\right)_{1\le k\le n\atop
n-l+1\le j\le n}
$$
is $l$,  we may assume without loss of generality that
$$
\det \left( {\de G_k\over \de w_j}(0, c_{z^0}, w'')\right)_{n-l+1\le k\le n\atop
n-l+1\le j\le n}\not= 0.
$$
After  possible shrinking of $U$ and $W''$,  we have
$$
\det \left( {\de G_k\over \de w_j}(0, c_{p}, w'')\right)_{n-l+1\le k\le
n\atop
n-l+1\le j\le n}\not= 0
$$
for all $p\in U\cap b\Omega$ and $w''\in W''$.  Therefore the map
$$
w''\mapsto \left(G_{n-l+1}(0, c_p, w''),\ldots, G_n(0, c_p, w'')\right)
$$
is invertible on $W''$.  Let $z''\mapsto (g_{n-l+1}(z''),\ldots, g_n(z''))$ be
its inverse.  Let  $\xi=\widehat\Psi_p(z)$ be defined by
\begin{align*}
\xi_j&=z_j-G_j(0, c_p, g_{n-l+1}(z''),\ldots, g_n(z'')), \qquad 1\le
j\le n-l, \\
\intertext{and}
\xi_j &=z_j,  \qquad n-l+1\le j\le n.
\end{align*}
Then $\widehat\Psi_p$ is a biholomorphic map from $U$ into $\C^n$.  After a
 rotation and a translation,  we may assume that $\widehat{\Psi}_p(p)=0$ and
the positive $\Re\xi_1$--axis is the outward normal direction at
$\widehat{\Psi}_p(p)$ of $\widehat{\Psi}_p(b\Omega\cap U)$.  By  the implicit function theorem,
$\widehat{\Psi}_p(b\Omega\cap U)$ is defined by
$$
\hat\rho(\xi)=\Re\xi_1+\hat f (\tilde\xi)+(\Im \xi_1)\hat g (\tilde\xi,
\Im\xi_1)
$$
for $\xi$ in a neighborhood $U_1$ of 0,  where
$$
\hat f(\tilde\xi)=O(|\tilde\xi|^2); \quad \hat g(\tilde\xi, \Im
\xi_1)=O(|\tilde\xi|+|\Im\xi_1|).
$$
From property (2) of $G$ above and the definition of
$\widehat{\Psi}_p$, we have
$$
\left\{ \xi\in U;\enspace \xi_1=\cdots =\xi_{n-l}=0\right\}\subset
\widehat{\Psi}_p(b\Omega\cap U).
$$
Therefore, $\hat f(0, \xi'')\equiv 0$.  Hence the complex Hessian of $f$ at
0 has the form
\begin{equation}\label{eq:hessian}
\left({\de \hat f(0)\over \de \xi_j\de\bar \xi_j}\right)_{2\le j, k\le
n}=\begin{pmatrix}B&A\\ \bar{A}^{\tau}&0\\\end{pmatrix}
\end{equation}
where $A$ is a $(n-l-1)\times l$ matrix and $B$ is a $(n-l-1)\times (n-l-1)$
matrix.

It follows from the pseudoconvexity of $\Omega$ that the matrix in \eqref{eq:hessian} is
positive semi-definite.  Therefore, $B$ is also positive semi-definite and
$A=0$.  After a unitary transformation in the $(z_2,\ldots, z_{n-l})$-variables,  we may assume
that
$$
B=\begin{pmatrix}\lambda_2&0&\ldots &0\\
            0&\lambda_3&\ldots&0\\
            0&0&\ldots&\lambda_{n-l}\\\end{pmatrix}
$$
where $\lambda_j$, $2\le j\le n-l$,  are positive constants.   Thus the
Taylor expansion of $\hat f(\tilde\xi)$ at 0 has the form
$$
\hat f(\tilde\xi)=\sum_{j=2}^{n-l} \lambda_j |\xi_j|^2
                        + 2\Re\sum_{2\le j, k\le n-l} {\de^2
\hat{f}(0)\over \de\xi_j\de\xi_k}\xi_j\xi_k +O(|\tilde
\xi|^3)
$$
for $\tilde\xi$ near 0.

Let $\zeta=\Psi_p(\xi)$ be defined by
\begin{align*}
\zeta_1&=\xi_1 +2\sum_{2\le j, k\le n-l} {\de^2
\hat{f}(0)\over \de\xi_j\de\xi_k}\xi_j\xi_k; \\
          \tilde\zeta &=\tilde\xi.
\end{align*}
Let $\Phi_p=\Psi_p\circ\widehat{\Psi}_p$.  Then $\Phi_p(b\Omega\cap U)$ has a defining
function $\rho(\zeta)=\hat{\rho}(\Psi_p^{-1}(\zeta))$ of the form
\begin{equation}\label{eq:rho2}
\rho(\zeta)=\Re\zeta_1+f(\tilde\zeta)+(\Im\zeta_1) g(\tilde\zeta,
\Im\zeta_1)
\end{equation}
near 0,  where
\begin{equation}\label{eq:f1}
f(\tilde\zeta)=\sum_{j=2}^{n-l} \lambda_j|\zeta_j|^2
+O(|\tilde\zeta|^3)
\end{equation}
and
\begin{equation}\label{eq:g}
g(\tilde\zeta, \Im\zeta_1)=O(|\tilde\zeta|+|\Im \zeta_1|).
\end{equation}
The Taylor expansion of $f(\tilde\zeta)$ at $(0,\zeta'')$
has the form
\begin{equation}\label{eq:f2}
\begin{aligned}
f(\tilde\zeta)&=\sum_{2\le j, k\le n-l}{\de^2 f\over
\de\zeta_j\de\bar{\zeta}_k} (0, \zeta'')\zeta_j\bar{\zeta}_k\\
              &\quad+2\Re \sum_{2\le j, k\le n-l}{\de^2 f\over
\de\zeta_j\de\zeta_k} (0, \zeta'')\zeta_j\zeta_k
+O(|\zeta'|^3)\\
\end{aligned}
\end{equation}
for $\tilde\zeta$ near 0.  Comparing \eqref{eq:f1} and \eqref{eq:f2},  we have
$$
{\de^2 f\over \de\zeta_j\de\bar{\zeta}_k} (0,
\zeta'')=\lambda_j\eps_{jk}+O(|\zeta''|)
$$
and
$$
{\de^2 f\over
\de\zeta_j\de\zeta_k} (0, \zeta'')=O(|\zeta''|)
$$
for $j, k\in \{ 2, \ldots, n-l\}$,  where $\eps_{jk}=0$ if $j\not= k$ and
$\eps_{jk}=1$ if $j=k$.   Thus it follows from \eqref{eq:f2} that
\begin{equation}\label{eq:f3}
f(\tilde\zeta)=\sum_{j=2}^{n-l} \lambda_j|\zeta_j|^2 +O(|\zeta'|^2\cdot
|\zeta''|+|\zeta'|^3 ).
\end{equation}
From \eqref{eq:rho2}, \eqref{eq:g}, and \eqref{eq:f3}, we then know that $\Phi_p(b\Omega\cap U)$ has a
defining function in the form of \eqref{eq:rho1}.\end{proof}

A boundary point $p$ of $\Omega$ is called a {\it  local weak peak point} if
there exist a neighborhood $U_p$ of $p$ and a function
$f_p$ holomorphic on $\Omega\cap U_p$ and continuous on $\overline{\Omega}\cap U_p$ such that $f_p(p)=1$, $|f_p(z)|<1$ for $z\in \Omega\cap U_p$,  and $|f_p(z)|\le 1$
for  $z\in \overline{\Omega}\cap U_p$.  The function $f_p$ is called a {\it local weak
peak function~} of $\Omega$ at $p$.

\begin{corollary}\label{cor:peak} Assume the same hypotheses as in Proposition~\ref{prop:foliation}. Then each $p\in b\Omega\cap V$ is a local weak peak point of $\Omega$.
\end{corollary}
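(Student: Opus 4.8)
The plan is to build the local weak peak function directly from the normal form of the defining function obtained in Proposition~\ref{prop:foliation}. Fix $p\in b\Omega\cap V$; after shrinking $V$ we may assume $p\in b\Omega\cap U$, so by Proposition~\ref{prop:foliation} there is a biholomorphism $\zeta=\Phi_p(z)$ of $U$ onto $B(0,1)$ with $\Phi_p(p)=0$ and with $\Phi_p(b\Omega\cap U)$ given near $0$ by a defining function $\rho$ of the form \eqref{eq:rho1}. It suffices to produce a local weak peak function for $\Phi_p(\Omega\cap U)$ at $0$ and then compose with $\Phi_p$. The natural candidate is
$$
f_p(\zeta)=\exp\bigl(-\,\zeta_1^{}/\epsilon\bigr)
$$
for a small fixed $\epsilon>0$ to be chosen, or rather $\exp(-\zeta_1)$ after an innocuous rescaling; note $f_p(0)=1$, and $|f_p(\zeta)|=\exp(-\Re\zeta_1)$, so the whole point is to show $\Re\zeta_1>0$ on $\Omega\cap U$ near $0$ (after shrinking) and $\Re\zeta_1\ge0$ on $\overline{\Omega}\cap U$ near $0$, i.e. that the hypersurface $\{\Re\zeta_1=0\}$ is, to the relevant order, a supporting hypersurface from the side of $\Omega$.

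The key step is therefore the geometric estimate: on $\Omega$ we have $\rho(\zeta)<0$, and from \eqref{eq:rho1},
$$
\Re\zeta_1 \;=\; -\sum_{j=2}^{n-l}\lambda_j|\zeta_j|^2 \;-\; O\bigl(|\zeta'|^2|\zeta''|+|\zeta'|^3+|\Im\zeta_1|\,|\zeta|\bigr)\;+\;\rho(\zeta).
$$
Since the $\lambda_j$ are positive and the remaining error terms are higher order, for $\zeta$ in a sufficiently small ball $B(0,\eta)$ the quadratic-plus-error sum $\sum\lambda_j|\zeta_j|^2+O(\cdots)$ is dominated by a genuine positive quantity away from the set where all these coordinates vanish; combined with $\rho(\zeta)\le 0$ on $\overline{\Omega}$ this forces $\Re\zeta_1\le 0$, hence $|f_p|\le 1$ there. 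For the strict inequality on $\Omega\cap U$, one argues that the only way to have $\Re\zeta_1=0$ with $\rho\le 0$ and all the displayed terms is to have $\zeta$ lie on the leaf of the Levi foliation through $0$, i.e. $\zeta_1=\zeta'=0$; but points of that leaf belong to $b\Omega$, not to $\Omega$. Thus $\Re\zeta_1>0$ strictly on $\Omega\cap U$ after shrinking, giving $|f_p(z)|<1$ there.

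The main obstacle is controlling the error term $O(|\Im\zeta_1|\cdot|\zeta|)$, which involves $\Im\zeta_1$ itself and so is not automatically absorbed into $\Re\zeta_1$; one must check that it is genuinely of strictly higher order relative to $\Re\zeta_1$ on the relevant region, or replace $f_p$ by $\exp(-\zeta_1/\epsilon)$ with $\epsilon$ small so that $\Re(\zeta_1/\epsilon)$ dominates the error after shrinking the neighborhood. A clean way to finish is to choose the peak function as $f_p(z)=\exp\bigl(-\Phi_{p,1}(z)\bigr)$ where $\Phi_{p,1}$ is the first component of $\Phi_p$, verify $f_p(p)=1$, and verify $|f_p|\le1$ on $\overline{\Omega}\cap U_p$ and $|f_p|<1$ on $\Omega\cap U_p$ by the above two estimates, shrinking $U_p$ once at the end so both hold simultaneously. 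Continuity of $f_p$ on $\overline{\Omega}\cap U_p$ and holomorphy on $\Omega\cap U_p$ are immediate since $\Phi_{p,1}$ is holomorphic on all of $U\supset U_p$. This completes the proof.
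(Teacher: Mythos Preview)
Your candidate $f_p(\zeta)=\exp(-\zeta_1)$ (or any rescaling of it) does not work, and the place where the argument breaks is exactly the one you flag as ``the main obstacle'' without actually resolving. First, there is a sign slip: with $\rho=\Re\zeta_1+\cdots$ and $\Omega=\{\rho<0\}$, the domain lies on the side where $\Re\zeta_1$ is \emph{negative} to leading order, so $|\exp(-\zeta_1)|=e^{-\Re\zeta_1}$ is typically $\ge 1$ there; you first say you need $\Re\zeta_1>0$ on $\Omega$ and two sentences later conclude $\Re\zeta_1\le 0$. But even after fixing the sign and trying $\exp(\zeta_1)$, the argument fails: the term $O(|\Im\zeta_1|\,|\zeta|)$ in \eqref{eq:rho1} is \emph{not} of higher order than $\Re\zeta_1$. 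Restrict to $\zeta'=\zeta''=0$, $\zeta_1=x+iy$; then $\rho=x+O(y^2)$, so $b\Omega$ meets this complex line along a curve $x=c\,y^2+\cdots$ with $c$ of indeterminate sign. If $c>0$, the region $\{\rho<0\}$ contains points with $x=\Re\zeta_1>0$, so no function of the form $\exp(a\zeta_1)$ can have modulus $<1$ throughout $\Omega$ near $0$ while equaling $1$ at $0$. Rescaling by $1/\epsilon$ does nothing to repair this, since the obstruction is the \emph{sign} of $\Re\zeta_1$, not its size.

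The paper's proof handles precisely this difficulty. From \eqref{eq:rho1} one cannot get $\Re\zeta_1<0$ on $\Omega$, but one \emph{can} get the wedge containment
\[
\Phi_p(\Omega\cap U)\cap B(0,\eps_0)\subset\{\Re\zeta_1-|\Im\zeta_1|<0\},
\]
because the bad error term is bounded by $C|\Im\zeta_1|\,|\zeta|\le |\Im\zeta_1|$ for $|\zeta|$ small, while the remaining errors are absorbed by $\sum\lambda_j|\zeta_j|^2$. One then needs a peak function for this wedge, and a linear exponential will not do; the paper takes $h(\zeta)=\exp\bigl(-(-\zeta_1)^{2/3}\bigr)$ with the principal branch, so that $-\zeta_1$ lies in the sector $|\arg w|<3\pi/4$ and hence $\Re\bigl((-\zeta_1)^{2/3}\bigr)>0$ on $\Omega$ near $0$. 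To salvage your approach you must replace $\exp(-\zeta_1)$ by a function of this type that peaks at the vertex of a wedge rather than at the boundary of a half-plane.
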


\begin{proof}  It follows from Proposition~\ref{prop:foliation} that for any $p\in b\Omega\cap V$,  there
exist a neighborhood $\widehat U_p$ of $p$ and a biholomorphic mapping $\Phi_p$
from $\widehat U_p$ onto $B(0, 1)$ such that
$$
\Phi_p(\Omega\cap \widehat U_p)\cap B(0, \eps_0)\subset\left\{ \zeta\in B(0,
\eps_0);\enspace \Re \zeta_1 -|\Im \zeta_1|<0\right\}
$$
for a sufficiently small $\eps_0>0$.  Let
$$
h(\zeta)=\exp(-(-\zeta_1)^{2/3})
$$
where the cubic root takes the principle branch by deleting the negative $\Re
\zeta_1$--axis.  Let $U_p=\Phi^{-1}_p(B(0,\eps_0))\cap \widehat U_p$.  Then
$f_p(z)=h(\Phi_p(z))$ is a local weak peak function at $p$ defined on
$\overline{\Omega}\cap U_p$.
\end{proof}

\begin{remark} Let $\Omega$ be a pseudoconvex domain with piecewise smooth boundary such that each piece has constant Levi rank.  Then it follows from the localization and length decreasing properties
of the Kobayashi metric (\cite[p.~136]{Royden71}) and Corollary~\ref{cor:peak} that $\Omega$ is Kobayashi complete.  This also follows from Theorem~\ref{th:main} (to be proved in Section~\ref{sec:estimates}). \end{remark}

\section{Construction of plurisubharmonic functions with large Hessians}\label{sec:psh}

We now turn to the construction of bounded plurisubharmonic functions with large Hessians near a piece of boundary that has constant Levi rank on a pseudoconvex domain $\Omega$.   For $\delta, a>0$,  and $X\in \C^n$,  let
\begin{align*}
P_{\delta, a}=\bigl\{ \zeta\in\C^n;\enspace |\zeta_1|<a\delta,\enspace
&|\zeta_j|<a\delta^{1\over 2}, \enspace 2\le j\le n-l,\\
              &\qquad\qquad  |\zeta_j|<a,\enspace n-l+1\le j\le n\bigr\}
\end{align*}
and let
$$
\omega(X, \delta)={|X_1|^2\over \delta^2}+\sum_{j=2}^{n-l} {|X_j|^2\over
\delta} +\sum_{j=n-l+1}^{n} |X_j|^2.
$$

We will follow the notations in Section~\ref{sec:foliation}. For  $p\in b\Omega\cap U$,  let $\widetilde{\Omega}_p=\Phi_p(\Omega\cap U)$.  The following construction
 of plurisubharmonic functions with large Hessians plays a key role in this paper
(compare \cite[Prop.~2.1]{Catlin89}; also \cite[Prop.~7]{Sibony81}).

\begin{theorem}\label{th:psh} Assume the hypothesis of Proposition~\ref{prop:foliation}.
Let $W\subset\subset U$ be a neighborhood of $z^0$.  Then for any $p\in
b\Omega\cap W$ and any sufficiently small $\delta$,  there exists a function
$g_{p,\delta}\in C^{\infty}(\overline{\widetilde{\Omega}}_p)$  and constants $a, b, C$
and $C_\alpha$, independent of $p$ and $\delta$,  such that
\begin{itemize}
\item[(1)]\enspace $|g_{p,\delta}(z)|\le 1$,  $z\in\widetilde{\Omega}_p$;
\item[(2)]\enspace $g_{p,\delta}$ is plurisubharmonic on $\widetilde{\Omega}_p$;
\item[(3)]\enspace for $\zeta\in P_{\delta, ab}\cap\widetilde{\Omega}_p$ and $Y\in \C^n$,
\[
L_{g_{p,\delta}}(\zeta, Y)\ge \frac{1}{C} \left( \frac{|\langle \partial\rho(\zeta), Y\rangle|^2}{\delta^2}+\sum_{j=2}^{n-l} {|X_j|^2\over
\delta} +\sum_{j=n-l+1}^{n} |X_j|^2\right);
\]
\item[(4)]\enspace for $\zeta\in P_{\delta, ab}\cap \widetilde{\Omega}_p$,
\[
\left| D^{\alpha}g_{p, \delta}(\zeta)\right|\le C_\alpha
\delta^{-(\alpha_1+{1\over 2}\sum_{j=2}^{n-l} \alpha_j)}
\]
where  $D^\alpha=D_1^{\alpha_1}\cdots D_n^{\alpha_n}$.
\end{itemize}
\end{theorem}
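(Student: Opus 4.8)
The plan is to build $g_{p,\delta}$ as a bounded perturbation of (minus the defining function) $\rho$ from Proposition~\ref{prop:foliation}, rescaled in the anisotropic fashion dictated by the box $P_{\delta, a}$, together with two auxiliary plurisubharmonic pieces that supply positivity in the complex normal direction and in the ``good'' complex tangential directions $\zeta_2,\dots,\zeta_{n-l}$. More precisely, I would work on $\widetilde\Omega_p=\Phi_p(\Omega\cap U)$, where by \eqref{eq:rho1} the defining function has the form $\rho(\zeta)=\Re\zeta_1+\sum_{j=2}^{n-l}\lambda_j|\zeta_j|^2+O(|\zeta'|^2|\zeta''|+|\zeta'|^3+|\Im\zeta_1||\zeta|)$. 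Since $\rho<0$ on $\widetilde\Omega_p$ and $-\rho\approx\delta(z)$, the function $\chi\!\bigl(\rho/(a\delta)\bigr)$, for a suitable convex increasing cutoff $\chi$, is plurisubharmonic (by pseudoconvexity of $\Omega$, $-e^{t\rho}$ or a similar convexification of $\rho$ is psh near the boundary), is bounded by $1$, and on the box $P_{\delta,ab}$ it contributes a term of size roughly $|\langle\partial\rho,Y\rangle|^2/\delta^2$ in the complex normal direction from the $\chi''$ term — this is the standard Diederich–Fornaess/Catlin mechanism. The second building block is a term like $\bigl(\sum_{j=2}^{n-l}\lambda_j|\zeta_j|^2\bigr)/\delta$ regularized by a cutoff to stay bounded; its complex Hessian is $\gtrsim\sum_{j=2}^{n-l}|Y_j|^2/\delta$ on $P_{\delta,ab}$, up to error terms coming from the $O(\cdots)$ remainder in \eqref{eq:rho1}, which on $P_{\delta,ab}$ are small relative to $1/\delta$ precisely because $|\zeta'|\lesssim\delta^{1/2}$, $|\zeta''|\lesssim 1$. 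The third block is simply $|\zeta_{n-l+1}|^2+\cdots+|\zeta_n|^2$, which is plurisubharmonic, bounded on $P_{\delta,ab}$ (shrink $a$ if needed), and supplies $\sum_{j=n-l+1}^n|Y_j|^2$.

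The key technical point is the balancing of constants: one takes $g_{p,\delta}=A_1\chi(\rho/(a\delta))+A_2\,\eta\!\bigl(\tfrac1\delta\sum_{j=2}^{n-l}\lambda_j|\zeta_j|^2\bigr)+A_3\,\eta\!\bigl(|\zeta''|^2\bigr)$ plus perhaps a large multiple of $\rho$ itself (times $e^{t\rho}$) to absorb the cross terms and the remainder contributions, where $\eta$ is again a bounded convexified cutoff agreeing with the identity near $0$. The cross terms in the Hessian — e.g. mixed $\zeta_1$–$\zeta_j$ terms arising from the remainder $O(|\Im\zeta_1||\zeta|)$ and from differentiating the cutoffs — are controlled on $P_{\delta,ab}$ by a Cauchy–Schwarz argument against the diagonal positivity already secured, at the cost of enlarging the constant $C$; one must verify this closes, i.e. that the "bad" off-diagonal terms are genuinely dominated by a fixed fraction of the diagonal terms with the given anisotropic weights. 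That the $j=1$ direction only sees $|\langle\partial\rho,Y\rangle|^2/\delta^2$ rather than $|Y_1|^2/\delta^2$ is exactly what the statement asks for — so no positivity is lost along the Levi foliation leaves (where $\langle\partial\rho,Y\rangle$ and $Y_2,\dots,Y_{n-l}$ all vanish at the central point), consistent with the domain being Levi flat in those directions.

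Finally, the derivative bounds (4) are obtained by the scaling built into the construction: each factor $\zeta_1$ carries weight $\delta$, each factor $\zeta_j$ ($2\le j\le n-l$) weight $\delta^{1/2}$, and $\zeta_j$ ($j\ge n-l+1$) weight $1$, while the cutoffs $\chi,\eta$ and all their derivatives are uniformly bounded; differentiating $\chi(\rho/(a\delta))$ a total of $|\alpha|\le 3$ times produces at worst $\delta^{-|\alpha|}$ times bounded factors, and more carefully, each $\partial_{z_j}$ or $\partial_{\bar z_j}$ hitting the argument contributes $\delta^{-1}$ only through $\partial\rho/\partial\zeta_1=1/2+O(|\zeta|)$ in the normal slot and a factor $\lambda_j\bar\zeta_j/\delta=O(\delta^{-1/2})$ in the tangential slots, giving exactly the claimed bound $\delta^{-(\alpha_1+\frac12\sum_{j=2}^{n-l}\alpha_j)}$. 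I expect the main obstacle to be the bookkeeping in step (3): showing that the remainder terms in \eqref{eq:rho1}, once multiplied by the various powers of $\delta^{-1}$ coming from the cutoffs, do not overwhelm the positive diagonal terms on $P_{\delta,ab}$, which forces a careful choice of the ratios $A_1:A_2:A_3$ and of $a$, $b$ small, uniformly in $p$ and $\delta$. This is where one leans on the precise shape of the error $O(|\zeta'|^2|\zeta''|+|\zeta'|^3+|\Im\zeta_1||\zeta|)$ — in particular that it has no pure $|\zeta'|^2$ or $|\zeta''|$ terms of size $O(1)$ and no $\Re\zeta_1$-independent linear-in-$\Im\zeta_1$ obstruction — which was arranged precisely for this purpose in Proposition~\ref{prop:foliation}.
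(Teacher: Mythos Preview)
Your proposal captures the correct anisotropic scaling and the right sources of positivity, but the architecture differs from the paper's, and your additive decomposition leaves a real gap in global plurisubharmonicity.

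The paper does \emph{not} build $g_{p,\delta}$ as a sum of three separate blocks. It uses a single localized function
\[
g_{p,\delta}=\tfrac{1}{C_1}\Bigl(\chi_2\bigl(\phi_\delta\, e^{M\rho/\delta}\bigr)+C_2|\zeta|^2\Bigr),
\]
where $\phi_\delta(\zeta)=\chi_1\bigl(\omega(\zeta,\delta)/a^2\bigr)$ is a cutoff in the anisotropic norm $\omega$, and $\chi_2$ is convex increasing with $\chi_2\equiv0$ on $(-\infty,\tfrac12)$. The exponential $e^{M\rho/\delta}$ does double duty: its Hessian contributes both the normal term $(M/\delta)^2|\langle\partial\rho,Y\rangle|^2$ and, via $(M/\delta)L_\rho$ together with the normal form \eqref{eq:rho1}, the tangential term $\gtrsim |Y'|^2/\delta$ on $P_{\delta,a}$. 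So your second block $\eta\bigl(\tfrac1\delta\sum\lambda_j|\zeta_j|^2\bigr)$ is unnecessary---the $|Y'|^2/\delta$ positivity comes directly from the Levi form of $\rho$. The anisotropic cutoff $\phi_\delta$ confines everything to $P_{\delta,a}$, where \eqref{eq:rho1} is valid; the outer $\chi_2$ then kills the function wherever $\phi_\delta e^{M\rho/\delta}\le\tfrac12$, so that outside this set $\hat g_{p,\delta}\equiv0$ and inside it one is automatically in the regime where the Hessian bound \eqref{eq:psh5} holds. Only a residual $-C|Y''|^2$ survives, absorbed by $C_2|\zeta|^2$.

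Your block $A_1\chi(\rho/(a\delta))$, by contrast, is supported on the entire strip $\{-a\delta<\rho<0\}\cap\widetilde\Omega_p$, and your assertion that it is plurisubharmonic ``by pseudoconvexity of $\Omega$'' is not correct as stated. For a \emph{tangential} vector $Y$ the Hessian of $\chi(\rho/(a\delta))$ reduces to $\tfrac{\chi'}{a\delta}L_\rho(\zeta,Y)$; pseudoconvexity gives $L_\rho\ge0$ only on $b\widetilde\Omega_p$, and on the strip one has merely $L_\rho(\zeta,Y)\ge -C\delta|Y|^2$, yielding a defect of size $-C|Y|^2$. Away from the origin your blocks~2 and~3 have already been flattened by their cutoffs $\eta$ and contribute nothing; your suggested corrector ``a large multiple of $\rho e^{t\rho}$'' has the same defect in tangential directions and does not repair it. Adding $C_2|\zeta|^2$ would---but that is exactly the paper's final step, and it is the combination of the anisotropic localization $\phi_\delta$ with the outer $\chi_2$ that reduces the problem to this single bounded defect. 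That localization mechanism is the missing idea in your sketch.
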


\begin{proof}     Let
$\chi_1(t)\in C^\infty(\R)$  be a decreasing function with
$\chi_1(t)=1$ for $t<{1\over2}$  and $\chi_1(t)=0$ for $t>1$.  Let
$\phi_\delta(\zeta)$ be defined by
$$
\phi_\delta(\zeta)=\chi_1\left({1\over a^2}\omega(\zeta,
\delta)\right)
$$
and let $G_{p,\delta}$ be defined by
$$
G_{p, \delta}(\zeta)=\phi_\delta(\zeta)e^{{M\over
\delta}\rho(\zeta)}
$$
where $\rho(\zeta)$ is the defining function obtained from Proposition~\ref{prop:foliation}  and
$M$ is a large constant to be chosen.

For $Y=(Y_1,\ldots, Y_n)\in \C^n$,  let
\begin{align*}
 Y^*_1&=Y_1-\sum_{j=2}^{n} {\de\rho\over \de
\zeta_j}(\zeta)\left({\de\rho\over \de\zeta_1}(\zeta)\right)^{-1} Y_j;\\
          \widetilde{Y}^*&=\widetilde Y,
\end{align*}
and $Y^*=(Y^*_1, \widetilde{Y}^*)$.  It follows from a direct calculation that
\begin{equation}\label{eq:psh1}
\begin{aligned}
L_{G_{p,\delta}}(\zeta, Y^*)&=e^{{M\over \delta}\rho(\zeta)}\bigl[
L_{\phi_\delta}(\zeta, Y^*)+2{M\over \delta}\Re \sum_{j, k=1}^{n}
{\de\phi_\delta\over
\de\zeta_j}(\zeta)Y^*_j\overline{{\de\rho\over\de\zeta_k}(\zeta) Y^*_k}\\
 &\quad + {M\over\delta}\phi_\delta L_\rho(\zeta,
Y^*)+\left({M\over\delta}\right)^{2}\phi_\delta(\zeta)\left|\langle
\de\rho(\zeta), Y^*\rangle\right|^2\bigr].\\
\end{aligned}
\end{equation}
It follows from \eqref{eq:rho1} that when $a$ is sufficiently small,
$$
\left|{\de\rho\over \de\zeta_1}(\zeta)\right|\approx 1;\quad
\left|{\de\rho\over \de\zeta_j}(\zeta)\right|\lesssim \delta^{1\over 2},\quad  2\le
j\le n-l; \quad
\left|{\de\rho\over \de\zeta_j}(\zeta)\right|\lesssim \delta, \quad n-l+1\le
j\le n
$$
for $\zeta\in P_{\delta, a}$.  Therefore
\begin{equation}\label{eq:psh2}
\left|\langle \de\rho(\zeta),
Y^*\rangle\right|^2=\left|{\de\rho\over\de\zeta_1}(\zeta) Y_1\right|^2\approx
|Y_1|^2
\end{equation}
for $\zeta\in P_{\delta, a}$.   Furthermore,
\begin{equation}\label{eq:psh3}
L_\rho(\zeta, Y^*)\gtrsim |Y'|^2- C\left(\delta |Y''|^2+\frac{|Y_1|^2}{\delta}\right)
\end{equation}
for $\zeta\in P_{\delta, a}$.

For the second term on the right hand side of \eqref{eq:psh1},  we have
\begin{align}
\bigl|{M\over \delta}\Re \sum_{j, k=1}^{n}
{\de\phi_\delta\over
\de\zeta_j}(\zeta)Y^*_j\overline{{\de\rho\over\de\zeta_k}(\zeta)
Y^*_k}\bigr|&=\bigl| {M\over \delta}\Re \Bigl(\sum_{j=1}^{n}
{\de\phi_\delta\over \de\zeta_j}(\zeta)Y^*_j\Bigr)\cdot
\Bigl({\de\rho\over\de\zeta_1}(\zeta)Y_1\Bigr)\bigr|\notag \\
&\lesssim M^{3\over 2}{|Y_1|^2\over \delta^2}+M^{1\over 2}\left( {|Y'|^2\over
\delta}+|Y''|^2\right) \label{eq:psh4}
\end{align}
for $\zeta\in P_{\delta, a}$.   Combining \eqref{eq:psh1}--\eqref{eq:psh4},
we obtain
\begin{equation}\label{eq:psh5}
L_{G_{p,\delta}}(\zeta, Y^*)\gtrsim {|Y_1|^2\over \delta^2}+{|Y'|^2\over
\delta}-C|Y''|^2
\end{equation}
if $\phi_\delta(\zeta)>{1\over 4}$ and $M$ is sufficiently large.

Let $\chi_2(t)\in C^\infty(\R)$ be a convex increasing function such that
$\chi_2(t)=0$ for $t<{1\over 2}$  and $\chi_2(t)>0$,  $\chi'_{2}(t)>0$ for
$t>{1\over 2}$.  Let $\hat g_{p, \delta}(\zeta)$ be defined by
$$
\hat g_{p, \delta}(\zeta)=\chi_2\left(G_{p, \delta}(\zeta)\right).
$$
If $\zeta\in\widetilde{\Omega}_p$ and $G_{p,\delta}(\zeta)>{1\over 4}$,  then
$\phi_\delta(\zeta) >{1\over 4}$.  Therefore it follows from \eqref{eq:psh5} that
\begin{equation}\label{eq:psh6}
\begin{aligned}
L_{\hat g_{p, \delta}}(\zeta, Y^*)&=\chi''_2(G_{p,\delta}(\zeta))\left|\langle
\de G_{p, \delta}(\zeta), Y^*\rangle\right|^2
+\chi'_2(G_{p,\delta}(\zeta))L_{G_{p, \delta}}(\zeta, Y^*)\\
&\gtrsim \chi'_2(G_{p, \delta})\left( {|Y_1|^2\over \delta^2}+{|Y'|^2\over
\delta}-C|Y''|^2 \right).
\end{aligned}
\end{equation}
If $\zeta\in P_{\delta, ab}\cap\widetilde{\Omega}_p$ and $b$ is sufficiently small,  then
$\phi_p(\zeta)=1$ and $\rho(\zeta)>-{\delta\over 2M}$.  Thus,
$$
G_{p, \delta}(\zeta)=e^{{M\over\delta}\rho(\zeta)}>e^{-{1\over 2}}>{1\over 2}.
$$
Therefore it follows from \eqref{eq:psh6} that
$$
L_{\hat g_{p, \delta}}(\zeta, Y^*)\gtrsim \chi'_2(e^{-{1\over 2}})\left(
{|Y_1|^2\over \delta^2}+{|Y'|^2\over \delta}-C|Y''|^2 \right).
$$
Let $g_{p,\delta}(\zeta)={1\over C_1}(\hat g_{p, \delta}(\zeta)+C_2
|\zeta|^2)$.  Then when $C_1$ and $C_2$ are sufficiently large,
$g_{p,\delta}(\zeta) $ satisfies properties (1)--(4) of Theorem~\ref{th:psh}.
\end{proof}
\smallskip
\smallskip

\section{Proof of the main theorem}\label{sec:estimates}
\bigskip
We prove Theorem~\ref{th:main} in this section.  Let $p\in b\Omega\cap W$.  Following the notations of Theorem~\ref{th:psh},  we
write $\widetilde{\Omega}_p=\Phi_p(\Omega\cap U)$.   Denote
\begin{align*}
Q_{\delta, c}=\bigl\{ \zeta\in \C^n;\enspace |\zeta_1-c\delta|<c^2\delta,\enspace
&|\zeta_j|<c^2\delta^{1\over 2}, \enspace 2\le j\le n-l,\\
                         &\qquad\qquad |\zeta_j|<c^2,\enspace n-l+1\le j\le
n\bigr\}.
\end{align*}
It follows from \eqref{eq:rho1} that, when $c$ and $\delta$ are sufficiently small,
\begin{equation}\label{eq:e0}
Q_{\delta, c}\subset \widetilde{\Omega}_p\cap P_{\delta, ab}.
\end{equation}
It is easy to see that
\[
|Y_1|^2\lesssim |\langle\partial\rho, Y\rangle|^2+c\left(\delta^{\frac{1}{2}}|Y'|^2+\delta |Y''|^2\right)
\]
on $Q_{\delta, c}$.  Therefore, by choosing $c$ sufficiently small, we have
\[
L_{g_{p,\delta}}(\zeta, Y)\gtrsim \omega(Y, \delta).
\]
Applying Theorems~\ref{th:Catlin} and~\ref{th:psh},  we have
\begin{equation}\label{eq:e1}
K_{\widetilde{\Omega}_p}(\zeta_\delta, \zeta_\delta)\approx
{1\over\delta^{n-l+1}}
\end{equation}
where $\zeta_\delta =(-c\delta, 0)$.

Let $p_\delta =\Phi^{-1}_p(\zeta_\delta)$.  Since $|J\Phi_p(p_\delta)|\approx
1$,  it follows from the localization property of the Bergman kernel (see
\cite{Ohsawa81}) and \eqref{eq:e1} that
\begin{align*}
K_\Omega(p_\delta, p_\delta)&\approx K_{\Omega\cap U}(p_\delta, p_\delta)\\
                  &=K_{\widetilde{\Omega}_p}(\zeta_\delta,
\zeta_\delta)|J\Phi_p(p_\delta)|^2\\
             &\approx {1\over \delta^{n-l+1}}.
\end{align*}
when $\delta$ is sufficiently small.   By letting $p$ vary on $b\Omega\cap W$ for
a small neighborhood $W\subset\subset U$ of $z^0$ and letting $\delta$
vary in $(0, \eps_0)$ for a sufficiently small $\eps_0>0$,   we obtain the estimates
for the Bergman kernel in Theorem~\ref{th:main}.

Estimates for the Bergman and Kobayashi metrics are proved similarly as above by applying Theorem~\ref{th:psh}, Theorem~\ref{th:Catlin}, and Proposition~\ref{prop:s}.
We provide only the detail for the Kobayashi metric. Let $Y=\Phi_{p*}(X)$.  By the localization property of the Kobayashi metric (see \cite[Lemma~2]{Royden71}), we have
\begin{equation}\label{eq:e2}
\begin{aligned}
F^K_\Omega (p_\delta, X)&\approx F^K_{\Omega\cap U}(p_\delta, X)\\
&= F^K_{\widetilde{\Omega}_p}(\zeta_\delta,
Y).
\end{aligned}
\end{equation}
It follows from \eqref{eq:e0}, Proposition~\ref{prop:s}, and Theorem~\ref{th:psh} that
\begin{equation}\label{eq:e3}
\left(F^K_{\widetilde{\Omega}_p}(\zeta_\delta, Y)\right)^2\approx \omega(Y, \delta)
\end{equation}
for sufficiently small $\delta$.

From the definitions of $\Phi_p$ and $Y$, it is easy to see that
\begin{equation}\label{eq:e4}
\omega(Y, \delta)-C|X|^2\lesssim {|L_\delta(p_\delta, X)|\over |\delta(p_\delta)|}
+{|\langle \de \delta(p_\delta), X\rangle |^2\over |r(p_\delta)|^2}\lesssim\omega(Y, \delta)+C|X|^2
\end{equation}
for a sufficiently large C (cf. \cite{Fu95}). This concludes the proof of Theorem~\ref{th:main}.

\begin{remark} Theorem~\ref{th:main} gives the following characterization of Levi-flatness using the Bergman kernel and invariant metrics\footnote{For the Bergman metric, this was an open problem in a recent paper of Ohsawa \cite[Question~3]{Ohsawa10}.}: Let $\Omega\subset\subset\C^n$ be a pseudoconvex domain and let $\delta(z)$ be the Euclidean distance to $b\Omega$.  Assume that $b\Omega$ is smooth in a neighborhood $V$ of $z^0\in b\Omega$. Then $b\Omega$ is Levi-flat near $z^0$ if and only if there exists a neighborhood $W\subset\subset V$ of $z^0$ and a positive constant $C$ such that either
\eqref{eq:equiv1} or \eqref{eq:equiv2} below holds:
\begin{equation}\label{eq:equiv1}
\frac{1}{C} |\delta(z)|^{-2}\le K_\Omega(z, z)\le C |\delta(z)|^{-2};
\end{equation}
\begin{equation}\label{eq:equiv2}
\frac{1}{C}\left(\frac{|\langle \de \delta (z),
X\rangle|^2}{|\delta(z)|^2}  +|X|^2\right)\le (F_\Omega(z, X))^2\le C\left(\frac{|\langle \de \delta(z),
X\rangle|^2}{|\delta(z)|^2} + |X|^2\right)
\end{equation}
for all $z\in W\cap \Omega$ and $X\in T^{1, 0}_z (\Omega)$,  where $F_\Omega(z, X)$ is either the Bergman or the Kobayashi  metric.

The sufficiency is a special case of Theorem~\ref{th:main}. To see the necessity, one observes that if $b\Omega$ is not Levi-flat near $z^0$, then by a simple continuity and inductive argument on the Levi rank, $z^0$ is an accumulation point of boundary points $z^k$ such that $b\Omega$ has constant Levi rank $\ge 1$ near each $z^k$. We then arrive at a contradiction to Theorem~\ref{th:main}.
\end{remark}

\bigskip

\noindent{\bf Acknowledgements.}  This paper was part of the author's Ph.D. thesis at Washington University in St.Louis. The author thanks his thesis advisor Steven Krantz for kind encouragements throughout the years. He also thanks Professors Bo-Yong Chen and Mei-Chi Shaw for stimulating recent discussions on related subjects which help convince him that the results in this paper might still be of current interest.

\bibliography{survey}
\providecommand{\bysame}{\leavevmode\hbox
to3em{\hrulefill}\thinspace}

\end{document}